\documentclass[a4paper,12pt]{article}
\usepackage{amsmath,amssymb,amsthm,graphics,latexsym}
\usepackage{indentfirst}
\usepackage{graphics}
\usepackage{hyperref}  

\title{\Large Rainbow triangles in edge-colored graphs with large minimum color degree}
\author{Bo Ning\thanks{Corresponding author. College of Computer Science, Nankai University, Tianjin 300350, P.R. China. Research supported by National Natural Science Foundation of China grant 12371350 and the Fundamental Research Funds for the Central Universities, Nankai University (No. 63263259). Email: \url{bo.ning@nankai.edu.cn}.}~~and Yuting Tian\footnote{College of Cryptology and Cyber Science, Nankai University, Tianjin 300350, P.R. China. Email: \url{tianyt@mail.nankai.edu.cn}}}
\date{}

\newtheorem{theorem}{Theorem}[section]
\newtheorem{lemma}[theorem]{Lemma}

\newtheorem{proposition}[theorem]{Proposition}
\newtheorem{problem}[theorem]{Problem}

\usepackage{indentfirst}

\newcommand{\dc}{d^c}
\newcommand{\deltac}{\delta^c}
\newcommand{\rt}{\operatorname{rt}}
\newcommand{\CN}{\operatorname{CN}}

\begin{document}
\maketitle

{\small
\noindent{\bfseries Abstract.}
Let $G$ be an edge-colored graph on $n$ vertices, and let $\deltac(G)$ denote its minimum color degree. 
Li and, independently Li, Ning, Xu, and Zhang, proved that every edge-colored graph on $n$ vertices with $\deltac(G) \ge \frac{n+1}{2}$ contains a rainbow triangle. 
Let $\rt(G)$ denote the number of rainbow triangles in $G$, and define 
\[
f(n) = \min\{ \rt(G) : |V(G)| = n,\ \deltac(G) \ge (n+1)/2 \}.
\]
In \cite{LiNingShiZhang2024}, the following open problem was posed: determine all the values of $f(n)$. 
In this paper, we determine $f(n)$ completely: $f(n) = (n^2-1)/8$ for odd $n\geq 3$, $f(n) = \frac{n^2}{4} - 1$ for all even $n \ge 6,$ and $f(4) = 4$. 
This resolves an open problem raised in \cite{LiNingShiZhang2024}.

\medskip
\noindent{\bfseries Keywords:} edge-colored graph; color degree; rainbow triangle; supersaturation.
}

\section{\large Introduction}

Throughout this paper, all graphs are finite, undirected, and simple. An edge-coloring of a graph $G$ is an arbitrary map from $E(G)$ to a set of colors; the coloring is not assumed to be proper. For $v\in V(G)$, let $\CN_G(v)$ denote the set of colors appearing on the edges incident with $v$, and let $\dc_G(v)=|\CN_G(v)|$. The \emph{minimum color degree} is $\deltac(G)=\min_{v\in V(G)}\dc_G(v)$. A subgraph is called \emph{rainbow} if all of its edges receive pairwise distinct colors. Let $\rt(G)$ denote the number of rainbow triangles in $G$, and $t(G)$ denote the number of triangles in the underlying
ordinary graph of $G$. 

Rainbow subgraphs provide colored analogs of classical extremal problems and have been studied extensively; see the survey of Kano and Li~\cite{KanoLi2008}.  
Rainbow triangles, in particular, have been investigated from multiple perspectives, including rainbow versions of Mantel-type theorems, the Caccetta--H\"aggkvist conjecture, and rainbow triangles in restricted colorings~\cite{Aharoni2023, Aharoni2020, Aharoni2019, Balogh2017,He2024}. 
Many natural extensions, such as rainbow cycles, have also attracted considerable attention; see~\cite{Cada2016,Czygrinow2021}.

Li and Wang~\cite{LiWang2012} conjectured that every $n$-vertex edge-colored graph with $\deltac(G)\ge (n+1)/2$ contains a rainbow triangle.  
This conjecture was proved by Li~\cite{Li2013} and, independently, by Li, Ning, Xu, and Zhang~\cite{LiNingXuZhang2014}.  
Moreover, Li et al.~\cite{LiNingXuZhang2014} established a stronger extremal result: for $n\ge 5$, any $n$-vertex edge-colored graph with $\deltac(G)\ge n/2$ contains a rainbow triangle, except when $n$ is even and $G$ is a properly edge-colored copy of $K_{n/2,n/2}$.  
For further results on Ore-type color-degree and color-neighborhood conditions, we refer to~\cite{LiNingZhang2016,FujitaNingXuZhang2019}.

In ordinary graphs, a classical supersaturation phenomenon for triangles is known: Rademacher (see Erd\H{o}s~\cite{Erdos1962}) proved that every $n$-vertex graph with at least $\frac{n^2+1}{4}$ edges contains at least $\frac{n}{2}$ triangles.
Erd\H{o}s conjectured that any $n$-vertex graph with $e(G)\ge\lfloor\frac{n^2}{4}\rfloor+q$ (where $q<\frac{n}{2}$) contains at least $q\lfloor\frac{n}{2}\rfloor$ triangles, which he verified for $q\leq 3$ \cite{Erdos1962} and other partial results \cite{Erdos1962-2}; the full conjecture was proved by Lov\'asz and Simonovits~\cite{LovaszSimonovits1983}. 
If an $n$-vertex graph has minimum degree at least $\frac{n+1}{2}$, then $e(G)\ge\frac{n^2}{4}+\frac{n}{4}$, so by the Lov\'asz--Simonovits theorem such graphs also exhibit triangle supersaturation.

In contrast, the supersaturation behavior of rainbow triangles in edge-colored graphs is quite different.
Li et al.~\cite{LiNingXuZhang2014} studied sufficient conditions in terms of the total color degree and the sum $e(G)+c(G)$, where $c(G)$ denotes the number of colors appearing on $E(G)$.
Extending their work, Fujita et al.~\cite{FujitaNingXuZhang2019} characterized those edge-colored graphs $G$ with $e(G)+c(G)\ge\binom{n+1}{2}-1$ that do not contain rainbow triangle, and posed the problem of finding the analogous threshold for a prescribed number of rainbow triangles.
This was resolved by Ehard and Mohr~\cite{EhardMohr2020}, who proved the following: if an $n$-vertex edge-colored graph $G$ has $m$ edges and $c$ colors and satisfies $m+c\ge\binom{n+1}{2}+k-1$, then $G$ contains at least $k$ rainbow triangles; moreover, the bound is sharp for $n\ge 3k$.
Hence, the condition $e(G)+c(G)$ does not cause supersaturation.

By contrast, the
minimum-color-degree condition gives
rise to supersaturation.
Li, Ning, Shi, and Zhang~\cite{LiNingShiZhang2024} initiated the study of this phenomenon and demonstrated, among other results, that
\[
\rt(G)\ge\frac{1}{6}\,\deltac(G)\bigl(2\deltac(G)-n\bigr)n.
\]
 
In particular, the threshold $\deltac(G)\ge (n+1)/2$ forces $\rt(G)=\Omega(n^2)$.
Define
\[
f(n)=\min\{\rt(G): |V(G)|=n,\ \deltac(G)\ge (n+1)/2\}.
\]

The following problem was listed as an open problem in \cite{LiNingShiZhang2024}.

\begin{problem}[ \rm{\cite[Problem~2,~pp.~757]{LiNingShiZhang2024}}]
Determine all values of $f(n)$.    
\end{problem}

The purpose of this paper is to provide a complete solution to the above problem.
Our main result is the following.

\begin{theorem}\label{thm:main}
Let $f(n)$ be defined as above. Then the following statements hold.
\begin{enumerate}
\item[\textup{(i)}] For every odd $n\geq 3,$
$f(n)=\frac{n^2-1}{8}.$

\item[\textup{(ii)}] For every even $n\ge 6$,
$f(n)=\frac{n^2}{4}-1.$
Moreover, $f(4)=4$.
\end{enumerate}
\end{theorem}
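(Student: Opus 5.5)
The proof splits into constructions giving upper bounds on $f(n)$ and a counting argument giving matching lower bounds. The key quantity is a local count at each vertex, which I then sum over all vertices.

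\textbf{Local count.} For a vertex $v$, pick a set $S_v$ of $\dc(v)$ edges at $v$ carrying pairwise distinct colors, and let $N_v$ be the set of their other endpoints. For $u\in N_v$, since $\dc(u)\ge (n+1)/2$ and $|N_v|\ge (n+1)/2$, many colors at $u$ go into $N_v$. Call an edge $uw$ inside $N_v$ \emph{good for $v$} if its color differs from the colors of $vu$ and $vw$; then $vuw$ is rainbow. At $u$, at most two colors are excluded: the color of $vu$, and at most one more that can coincide with $vw$ along a single edge. So $u$ has at least about $\dc(u)-2-(n-|N_v|)$ good edges into $N_v$. Summing over $u\in N_v$ and halving gives a lower bound on the number of rainbow triangles through $v$ of order $\frac{|N_v|}{2}\bigl(|N_v|+\dc(u)-n-2\bigr)$. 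A more careful version takes, for each $u$, one representative edge per color. Summing over $v$, each rainbow triangle is counted three times.

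For odd $n=2k+1$ with $\deltac=k+1$, this heuristic gives each $v$ at least $\frac{k+1}{2}$ triangles, hence $\rt(G)\ge \frac{n(k+1)}{6}$. That is linear, not quadratic, so the naive local count is too weak. The real input must be the supersaturation bound of Li--Ning--Shi--Zhang, $\rt(G)\ge\frac16\deltac(2\deltac-n)n$. For odd $n$ this gives only about $n^2/12$, so it has to be sharpened to $(n^2-1)/8$.

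\textbf{Sharpening via vertex weights.} My plan is to weight each vertex by its excess color degree and count rainbow triangles through pairs $(v,u)$ with $vu$ an edge. Fix a vertex $v$ and a rainbow star of size $k+1$ at $v$. Each $u$ in the star has at least $k+1$ colors, of which at most $k$ edges go outside $N_v\cup\{v\}$. Hence $u$ sends at least one edge into $N_v$ whose color avoids both $vu$ and $vw$, after adjusting for a possible coincidence with the color of $vw$. This yields at least $(k+1)/2$ rainbow triangles containing $v$ inside its star. To get $n^2/8$ I need each vertex $v$ to lie in about $3n/8$ rainbow triangles on average. I expect to obtain this by double counting ``rainbow cherries'' $u-v-w$ with $uw$ an edge, and relating their number to the total color degree using Cauchy--Schwarz over the non-edges and monochromatic pairs.

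\textbf{Extremal constructions.} The upper bounds should come from a balanced blow-up of the extremal configurations.
\begin{itemize}
\item \emph{Odd $n$.} Take $K_{(n+1)/2,(n-1)/2}$ properly colored, and add a perfect-matching-like structure, or a near-proper coloring, inside the larger part so that those vertices reach color degree $(n+1)/2$. Each added edge creates about $(n-1)/2$ triangles, but a suitable coloring makes only about half of them rainbow. With roughly $(n+1)/4$ added edges, this gives $(n^2-1)/8$ rainbow triangles.
\item \emph{Even $n\ge 6$.} Take $K_{n/2,n/2}$ properly colored and add one perfect matching in each part, for $n/2$ added edges. Colors are chosen so that each added edge lies in about $n/2-1$ rainbow triangles, giving roughly $n^2/4-1$ after an adjustment.
\item \emph{$n=4$.} The value $f(4)=4$ comes from $K_4$, which is forced since every vertex needs color degree at least $3$; one checks that all $4$ triangles must be rainbow.
\end{itemize}

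\textbf{Main obstacle.} The hard step is the matching lower bound, especially for even $n$, where the structure is close to $K_{n/2,n/2}$. I would use the stability result of Li et al.\ (the only $\deltac\ge n/2$ rainbow-triangle-free graphs are properly colored $K_{n/2,n/2}$). After deleting a few edges the graph is near-bipartite, and each edge inside a part is then forced to contribute about $n/2-1$ rainbow triangles. A separate case analysis handles graphs far from bipartite, where the Li--Ning--Shi--Zhang bound plus the excess in the local counts should already give many more rainbow triangles.
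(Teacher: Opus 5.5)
\textbf{There is a genuine gap: the proposal contains no actual lower-bound argument for either parity, and the constructions do not give the claimed values.}

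\textbf{Lower bound.} Your local count only reproduces the Li--Ning--Shi--Zhang bound: $n(k+1)/6=n(n+1)/12$ is quadratic, just with constant $1/12$ instead of $1/8$. The ``vertex weights'' step and the Cauchy--Schwarz count of rainbow cherries are stated as expectations, not as steps.

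The even-case plan has no quantitative content either. The stability theorem only classifies rainbow-triangle-free graphs with $\deltac\ge n/2$; it says nothing about how many rainbow triangles a graph with $\deltac\ge (n+1)/2$ must have. The claim that each edge inside a part is ``forced'' to lie in about $n/2-1$ rainbow triangles is unsupported. Moreover, the extremal graph $I_{r-1}\vee C_{r+1}$ is an unbalanced complete bipartite graph plus a cycle, not a perturbation of $K_{r,r}$.

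The idea you are missing is the paper's. Bound the non-rainbow triangles by monochromatic angles: there are at most $\sum_v p_2(d(v)-\dc(v)+1)\le\sum_v p_2(d(v)-k+1)$ of them, with $k=\lfloor n/2\rfloor+1$. Then show that the underlying graph has exactly enough surplus triangles to pay for them.
\begin{itemize}
\item For odd $n$, this is the inequality $t(H)\ge (n^2-1)/8+\sum_v p_2(d_H(v)-k+1)$ whenever $\delta(H)\ge k$. It follows from $3t\ge\sum_v d(v)^2-n\,e$ when the degree surplus is large, and from Lov\'asz--Simonovits when it is small.
\item For $n=2m$, pass to the complement $F$ of the underlying graph, so $\Delta(F)\le m-2$. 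Goodman's identity gives $\rt(G)\ge m(m-1)(m+4)/3-(2e(F)+t(F))$, and the bound $2e(F)+t(F)\le(m^3-4m+3)/3$ is proved with Chase, Lo and Fisher.
\end{itemize}

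\textbf{Constructions.} For even $n$, in $K_{n/2,n/2}$ plus a perfect matching in each part, every vertex has degree exactly $n/2+1=\deltac$. So the coloring is forced to be proper and all $n^2/4$ triangles are rainbow. This gives $n^2/4$, not $n^2/4-1$, and it needs $4\mid n$. The paper instead uses $I_{r-1}\vee C_{r+1}$ with $c(a_ib_j)=i+j$ and $c(b_jb_{j+1})=j+r$ modulo $r+1$.

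For odd $n$ your count is inconsistent. The $(n+1)/4$ matching edges in the larger side give exactly $(n^2-1)/8$ triangles, and all of them are automatically rainbow for the same reason, not ``half''. This is a valid extremal example precisely when $(n+1)/2$ is even, i.e.\ $n\equiv 3\pmod 4$. The paper does not build odd examples itself; it cites Li et al.

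Be aware that for $n\equiv 1\pmod 4$, $n\ge 9$, no example can reach $(n^2-1)/8$. Write $n=2m+1$ with $m$ even. Equality throughout the paper's odd-case chain forces one of two things:
\begin{itemize}
\item an $(m+1)$-regular graph on $2m+1$ vertices, which is impossible by parity; or
\item a universal vertex $u$ with $G-u\cong K_{m,m}$, together with a single color class of size $m$ at $u$ all of whose $\binom m2$ pairs close triangles.
\end{itemize}
The second is impossible for $m\ge 3$, since such a class spans at most $m^2/4$ edges of $K_{m,m}$. For instance, one checks that $f(9)=11$, attained by $K_{5,4}$ plus $P_3\cup K_2$ inside the $5$-side. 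So the odd upper bound, as stated for all odd $n$, holds only for $n\equiv 3\pmod 4$ and $n=5$.
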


The proof of Theorem \ref{thm:main}(i) combines the lower bound proved in Section \ref{sec:proof-odd} with the following construction result of Li et al. \cite{LiNingShiZhang2024}.

\begin{lemma}[Li et al. {\cite[Proposition 5]{LiNingShiZhang2024}}]\label{thm:li-odd-upper}
For every odd integer $n\ge 3$, there exists an edge-colored graph $G$ on $n$ vertices with $\deltac(G)\ge (n+1)/2$ and
\[
   \rt(G)=\frac{n^2-1}{8}.
\]
Equivalently, $f(n)\le (n^2-1)/8$ for odd $n$.
\end{lemma}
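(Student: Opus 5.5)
Since the lemma is quoted from Li et al.\ and the paper cites it rather than reproving it, what follows is my own plan for an explicit construction. I have checked the count only when $k$ is odd; the case $k$ even is unresolved and is the main obstacle. Write $n=2k+1$, so the target is $\deltac(G)\ge k+1$ and $\rt(G)=\binom{k+1}{2}=k(k+1)/2$. The case $n=3$ is the rainbow triangle. For $n\ge 5$ I would build $G$ from a complete bipartite graph with parts $A=\{a_1,\dots,a_k\}$ and $B=\{b_0,\dots,b_k\}$, plus a few edges inside $B$.

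First I colour the bipartite edges by $c(a_ib_j)=i+j \pmod{k+1}$. This is a proper colouring of $K_{k,k+1}$ with $k+1$ colours, so every $a_i$ already has colour degree $k+1$. Every $b_j$ sees $k$ distinct colours and misses exactly one colour, which I denote $m(b_j)$. Because the colouring is cyclic, $j\mapsto m(b_j)$ is a bijection onto the colour set. Since $A$ is independent, every triangle has at least two vertices in $B$, and only edges inside $B$ create triangles. A triangle $ab b'$ with $bb'\in E(G[B])$ has distinct colours on $ab$ and $ab'$, by properness. So it is rainbow exactly when $c(bb')\notin\{c(ab),c(ab')\}$. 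This gives the two building blocks:

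\begin{itemize}
\item an edge $bb'$ with a brand new colour raises both endpoints to colour degree $k+1$ and lies in exactly $k$ rainbow triangles;
\item an edge $bb'$ coloured $m(b')$ raises only $b'$, and lies in exactly $k-1$ rainbow triangles, namely all except the one with the $a$ satisfying $c(ab)=m(b')$.
\end{itemize}

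I must also ensure that $G[B]$ is triangle-free, or else count the triangles inside $B$.

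If $k$ is odd, so $|B|=k+1$ is even, I add a perfect matching on $B$ and give each matching edge its own new colour. Then every vertex has colour degree at least $k+1$. The matching is triangle-free, and every triangle consists of one matching edge and one vertex of $A$. Hence $\rt(G)=k\cdot\frac{k+1}{2}=\frac{n^2-1}{8}$, exactly as required.

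The main obstacle is $k$ even, where $|B|$ is odd and no perfect matching exists. Matching $k$ of the vertices and covering the last one by an edge of the second type gives $\frac{k^2}{2}+(k-1)$ rainbow triangles. This exceeds the target by $\frac{k-2}{2}$, so the construction must be modified. The first thing I would try is to make the $A$--$B$ colouring less rigid. For a few vertices $a\in A$, I would let some pairs $ab,ab'$ share a colour, and compensate the lost colours at $a$ by adding edges inside $A$ that use colours already present on $B$. The aim is for every triangle through the extra odd vertex to become non-rainbow, while no new rainbow triangles arise inside $A$. An alternative is to shift the sizes of $A$ and $B$ by one and redistribute the deficiency.

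In either variant the verification has three steps, in this order:
\begin{enumerate}
\item check that every vertex still sees $k+1$ colours;
\item enumerate triangles by type: two or more vertices in $B$, two or more vertices in $A$, or entirely inside $A$ or inside $B$;
\item confirm that the total rainbow count is exactly $k(k+1)/2$.
\end{enumerate}
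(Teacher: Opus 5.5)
Your construction for odd $k$ is correct: $K_{k,k+1}$ with $c(a_ib_j)=i+j$, plus a perfect matching on $B$ in fresh colours, has colour degree $k+1$ at every vertex, and all $k\cdot\frac{k+1}{2}=\frac{n^2-1}{8}$ triangles are rainbow. The paper gives no construction of its own here; it only cites Li et al. Your fallback for even $k$ already settles $k=2$: the excess $\frac{k-2}{2}$ vanishes, and it gives $\rt(G)=3=\frac{5^2-1}{8}$. The genuine gap is even $k\ge 4$, i.e.\ $n\equiv 1\pmod 4$ with $n\ge 9$. No modification can close it. For these $n$, every edge-coloured $G$ with $\deltac(G)\ge k+1$ has $\rt(G)\ge\binom{k+1}{2}+1$. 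So the statement as quoted, and with it Theorem~\ref{thm:main}(i), is false there. For example $f(9)=11$, attained by your own fallback with $k=4$.

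To see this, examine the equality case of the paper's lower-bound chain. Write $d_G(v)=k+1+x_v$, $X=\sum_v x_v$, $Y=\sum_v x_v^2$, and $q=\frac{k+1+X}{2}$.

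Lemma~\ref{lem:same-color-pairs} gives $\rt(G)\ge t(G)-\frac{X+Y}{2}$. The proof of Lemma~\ref{lem:ordinary-surplus} (with $m=k$) controls $t(G)-\frac{X+Y}{2}-\binom{k+1}{2}$ in two ways. If $X\ge k-1$, this quantity is at least $\frac{(k+1)(X-k+1)}{6}$. If $X\le k-1$, it equals $\bigl(t(G)-qk\bigr)+\frac{(k-1)X-Y}{2}$, and both terms are nonnegative by Lemma~\ref{lem:lovasz-Simonovits}.

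Since $k$ is even, $(2k+1)(k+1)$ is odd, so $X$ is odd and hence positive. Also $0\le x_v\le k-1$ gives $Y\le (k-1)X$, with equality only if every $x_v\in\{0,k-1\}$. Therefore $\rt(G)=\binom{k+1}{2}$ forces $X=k-1$ and $t(G)=k^2$. That is, one vertex $w$ has degree $2k$ and all others have degree $k+1$.

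Since $w$ is adjacent to every other vertex, $t(G)=e(G-w)+t(G-w)=k^2+t(G-w)$. So $G-w$ is triangle-free with $k^2$ edges on $2k$ vertices, and $G-w\cong K_{k,k}$ by Mantel's theorem. Every $v\ne w$ has $d_G(v)=k+1\le\dc_G(v)$, so its edges receive distinct colours. Hence a triangle $wxy$ is non-rainbow if and only if $c(wx)=c(wy)$.

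The colour classes at $w$ number at least $k+1$ and cover $2k$ edges; let class $i$ contain $p_i$ and $q_i$ vertices of the two sides. The number of non-rainbow triangles is then $\sum_i p_iq_i$, and it would have to equal $k^2-\binom{k+1}{2}=\binom{k}{2}$. But $p_iq_i\le\binom{p_i+q_i}{2}$, strictly when $p_i+q_i\ge 3$. If all classes have size at most $2$, then $\sum_i p_iq_i\le 2k-(k+1)<\binom{k}{2}$. Otherwise $\sum_i p_iq_i<\sum_i\binom{p_i+q_i}{2}\le\binom{k}{2}$ by the merging argument in the proof of Lemma~\ref{lem:same-color-pairs}.

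This contradiction shows that your step~3 cannot be achieved for even $k\ge 4$. What your argument actually proves is the lemma for $n\equiv 3\pmod 4$ and for $n=5$.
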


The even case  requires a different extremal construction. For every even \(n\ge 6\),
let the underlying graph
be $G_0=I_{n/2-1}\vee C_{n/2+1}$, that is, the complete join of an independent set of size \(n/2-1\) and a cycle of length
\(n/2+1\). Color the edges cyclically with \(n/2+1\) colors, as in the
construction given in Section~\ref{sec:proof-even}, so that every vertex sees exactly
\(n/2+1\) colors. The only triangles are the triples consisting of one vertex
from \(I_{n/2-1}\) and one edge of \(C_{n/2+1}\), so there are exactly $\left(\frac n2-1\right)\left(\frac n2+1\right)=\frac{n^2}{4}-1$
triangles; with this cyclic coloring all of them are rainbow. This gives the
upper bound $f(n)\le \frac{n^2}{4}-1$ for every even \(n\ge 6\). The exceptional case \(n=4\) is separate:
\(\delta^c(G)\ge 3\) forces the underlying graph to be \(K_4\), and all four
triangles are rainbow.

\noindent
{\underline{\bf {Proof strategy.}}}
Our lower-bound proof treats odd and even $n$ separately.
For odd $n$, we bound the number of
non-rainbow triangles by counting
monochromatic angles, i.e., pairs of the
same colored edges meeting at a
vertex and combine this estimate with the Lov\'asz--Simonovits theorem for small $q$.
The even case is more delicate.
We pass to the complement of the underlying simple graph and reduce the problem to an extremal estimate on $2e(F)+t(F)$ under the constraint $\Delta(F)\le r-2$.
The proof of this estimate relies on several classical and recent tools: Goodman's formula~\cite{Goodman1959}, Chase's theorem on the maximum number of triangles in graphs with given maximum degree~\cite{Chase2020}, Fisher's lower bound for the number of triangles~\cite{Fisher1989}, and a special case of Lo's theorem on cliques in graphs with a given minimum degree~\cite{Lo2012}.

The paper is organized as
follows. In Section \ref{sec:proof-odd}, we present the proof of Theorem \ref{thm:main}(i). Section \ref{sec:proof-even} proves Theorem \ref{thm:main}(ii) via the complement extremal problem and then gives the sharp even construction, including the exception case $n=4$.

\section{\large Proof of Theorem \ref{thm:main}(i)}\label{sec:proof-odd}

To avoid any convention
concerning binomial coefficients whose
upper entry is smaller than the lower
entry, let $p_2(x)=x(x-1)/2$ for every integer $x$. Thus, $p_2(0)=p_2(1)=0$.

\begin{lemma}\label{lem:same-color-pairs}
Let $G$ be an edge-colored graph. Then
\[
   \rt(G)\ge t(G)-\sum_{v\in V(G)} p_2\bigl(d_G(v)-\dc_G(v)+1\bigr).
\]
\end{lemma}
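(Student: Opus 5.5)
Every triangle of the underlying graph is either rainbow or non-rainbow, so $\rt(G)=t(G)-N$, where $N$ is the number of non-rainbow triangles. It therefore suffices to show
\[
N\le \sum_{v} p_2\bigl(d_G(v)-\dc_G(v)+1\bigr).
\]
The plan is to charge each non-rainbow triangle to a \emph{monochromatic angle}, meaning an unordered pair of equally colored edges sharing a vertex $v$, and then bound the number of such angles at each vertex.

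\textbf{Injective charging.} A non-rainbow triangle $xyz$ has two equally colored edges, say $xy$ and $xz$. These form a monochromatic angle at the common vertex $x$. The angle $\{xy,xz\}$ determines the triangle $xyz$, since the triangle is spanned by the two edges. So any choice rule that assigns one such angle to each non-rainbow triangle is injective; for definiteness, pick the angle at the smallest-labelled eligible apex. Hence $N\le\sum_v A(v)$, where $A(v)$ is the number of monochromatic angles at $v$.

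\textbf{Bounding $A(v)$.} Let the color classes at $v$ have sizes $a_1,\dots,a_k$, where $k=\dc_G(v)$, each $a_i\ge 1$, and $\sum a_i=d_G(v)$. Then $A(v)=\sum_i p_2(a_i)$. Since $p_2$ is convex, this sum under the given constraints is maximized when all the excess is concentrated in a single class, i.e.\ $a_1=d-k+1$ and all other $a_i=1$. This can be justified by a simple exchange: for $a\ge b\ge 2$, moving one unit from $b$ to $a$ changes the sum by $a-(b-1)>0$. Therefore
\[
A(v)\le p_2\bigl(d_G(v)-\dc_G(v)+1\bigr).
\]
Vertices with $d_G(v)=0$ give $p_2(1)=0$, consistent with the convention for $p_2$.

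Summing over all $v$ yields the lemma. I do not expect a real obstacle. The only points needing care are the injectivity of the charging, which rests on the angle determining the triangle, and the convexity step, which the exchange argument settles.
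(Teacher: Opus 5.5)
Your proposal is correct and follows the paper's proof. You charge each non-rainbow triangle to a monochromatic angle at a vertex, and you bound the number of such angles at $v$ by $p_2\bigl(d_G(v)-\dc_G(v)+1\bigr)$ through an exchange argument that concentrates the excess in one color class. The differences are only cosmetic: you move one edge at a time (gain $a-b+1$) where the paper merges two classes into sizes $a+b-1$ and $1$ (gain $(a-1)(b-1)$), and you make the charging explicitly injective where the paper uses a union bound.
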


\begin{proof}
Fix a vertex $v$. If $d_G(v)=0$, then there are no incident pairs with the same color at $v$, and the desired local contribution is $0=p_2(1)$. Suppose $d_G(v)>0$. Partition the edges incident with $v$ according to their colors, and let the sizes of the color-classes be $a_1,\ldots,a_k$, respectively, where $k=\dc_G(v)$ and $a_1+\cdots+a_k=d_G(v)$. The number of unordered pairs of incident edges at $v$ having the same color is $\sum_{i=1}^k p_2(a_i)$.

For fixed $d_G(v)$ and $k$, this sum is maximized when one color class is as large as possible and all other color classes have size $1$. Indeed, if two classes have sizes $a,b\ge 2$, then replacing them with classes of sizes $a+b-1$ and $1$ changes the sum by
\[
   p_2(a+b-1)-p_2(a)-p_2(b)=(a-1)(b-1)\ge 0.
\]
Repeating this operation gives
\[
   \sum_{i=1}^k p_2(a_i)
   \le p_2\bigl(d_G(v)-k+1\bigr)
   =p_2\bigl(d_G(v)-\dc_G(v)+1\bigr).
\]

Observe that the number of rainbow triangles equals the number of triangles minus the number of non-rainbow triangles in $G$.
Every non-rainbow
triangle has two edges of the same
color. Such triangles are of two types:
those using exactly two colors and
those using only one color.

These two edges meet at a vertex of the triangle, so the triangle is witnessed by an incident pair with the same color. The union bound over all vertices proves the lemma.
\end{proof}

We shall use the following Lov\'asz--Simonovits theorem. 

\begin{lemma}[Lov\'asz--Simonovits {\cite{LovaszSimonovits1983}}]\label{lem:lovasz-Simonovits}
Let $n\ge 3$, and let $q$ be an integer with $0\le q<n/2$. Every graph on $n$ vertices with $\lfloor n^2/4\rfloor+k$ edges contains at least $k\lfloor n/2\rfloor$ triangles.
\end{lemma}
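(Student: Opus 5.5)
Since this is a cited classical result, the paper does not need to re-prove it; if I had to, this is the route I would try. I read the statement with the number of extra edges equal to $q$, i.e.\ $k=q$. The case $q=0$ is trivial. For $q\ge 1$ I would argue by induction on $n$ and split into two cases according to the minimum degree.

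\emph{Low-degree case.} Suppose some vertex $v$ has $d_G(v)\le \lfloor n/2\rfloor$. Delete it. Since $\lfloor n^2/4\rfloor-\lfloor (n-1)^2/4\rfloor=\lfloor n/2\rfloor$, the graph $G-v$ has $\lfloor (n-1)^2/4\rfloor+q'$ edges with $q'=q+\lfloor n/2\rfloor-d_G(v)\ge q$. Suppose the induction hypothesis applies to $G-v$, which needs $q'<(n-1)/2$. Then $G-v$ already contains $q'\lfloor (n-1)/2\rfloor$ triangles. I would compare this with $q\lfloor n/2\rfloor$. When $n$ is odd the floors agree. When $n$ is even the deficit is $q$, and it must be recovered either from $q'>q$ or from triangles through $v$.

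If instead $q'\ge (n-1)/2$, I would fall back on Rademacher-type counting applied directly. That bound is already of order $n^2/8\ge q\lfloor n/2\rfloor$ in this regime, so it suffices if it can be made precise.

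\emph{High-degree case.} Suppose $\delta(G)>\lfloor n/2\rfloor$. Every edge $xy$ lies in at least $d(x)+d(y)-n\ge 2\lfloor n/2\rfloor+2-n\ge 1$ triangles. Summing over edges gives
\[
3t(G)\ge \sum_{xy\in E}\bigl(d(x)+d(y)-n\bigr)=\sum_v d(v)^2-n\,e(G)\ge e(G)\Bigl(\tfrac{4e(G)}{n}-n\Bigr).
\]
This yields roughly $qn/3$ triangles, which is short of $qn/2$ by a constant factor.

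To close the gap I would use stability. A graph with $\lfloor n^2/4\rfloor+q$ edges, $q<n/2$, and few triangles must be close to $K_{\lceil n/2\rceil,\lfloor n/2\rfloor}$. So I would take a max-cut bipartition $(A,B)$ and let the surplus edges be those inside $A$ or inside $B$. Each surplus edge $xy$ inside $A$ forms triangles with the common neighbours of $x$ and $y$ in $B$. Max-cut gives each of $x,y$ at least half its degree across the cut, and a count of the missing cross edges versus the surplus edges shows that each surplus edge has about $\lfloor n/2\rfloor$ common neighbours on the other side. That produces the $q\lfloor n/2\rfloor$ triangles.

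\emph{Main obstacle.} The hard step is making this stability count exact rather than asymptotic. One has to show that missing cross edges cannot destroy too many of these triangles, using the condition $q<n/2$ to bound how many cross edges can be missing. This is exactly where the Lov\'asz--Simonovits argument is delicate. I would first try to handle it by a weighting argument that charges each missing cross edge to the surplus edges it affects.
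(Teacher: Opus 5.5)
The paper gives no proof of this lemma; it simply cites Lov\'asz--Simonovits, as your first sentence anticipates, and $k=q$ is the intended reading. What you sketch, however, is a plan rather than a proof, and it breaks exactly where the theorem is hard.

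\textbf{Low-degree case.} For even $n$ and $d_G(v)=n/2$ you get $q'=q$, so induction yields only $q(n/2-1)$ triangles. You give no reason why $v$ should lie in the missing $q$ triangles.

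More seriously, the regime $q'\ge (n-1)/2$ is not a side case. It contains, for example, odd $n$ with $q=d_G(v)=(n-1)/2$. There the bound you want for $G-v$ is simply false. Take $N=n-1\ge 6$ even and the graph $K_{N/2+1,N/2-1}$ plus a cycle $C_{N/2+1}$ inside the larger part; this is the paper's own graph $I_{r-1}\vee C_{r+1}$. It has $\lfloor N^2/4\rfloor+N/2$ edges but only $N^2/4-1=q\lfloor n/2\rfloor-1$ triangles. So no estimate on $G-v$ alone can finish, and any argument must use how $v$ attaches to $G-v$.

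Your fallback is also numerically wrong. The target $q\lfloor n/2\rfloor$ can be about $n^2/4$, not at most $n^2/8$. Goodman/Moon--Moser counting gives only about $q'n/3$ triangles in $G-v$ (or about $qn/3$ in $G$), and $q'$ can equal $q$.

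\textbf{High-degree case.} It is empty for even $n$, since $\delta(G)\ge n/2+1$ forces $e(G)\ge n^2/4+n/2$. For odd $n=2m+1$ it requires $q\ge (m+1)/2$.

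Yet it is precisely the case the paper needs. Lemma~\ref{lem:ordinary-surplus} applies Lov\'asz--Simonovits only to graphs on $2m+1$ vertices with minimum degree at least $m+1$. So you cannot sidestep it by proving only what the paper uses.

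You correctly note that the degree count gives only about $qn/3$. Everything then rests on the stability step you leave open:
\begin{itemize}
\item The claim that few triangles force closeness to $K_{\lceil n/2\rceil,\lfloor n/2\rfloor}$ is unproved. Its standard forms are asymptotic, valid only for large $n$, whereas the lemma is claimed for every $n\ge 3$.
\item Max-cut only gives $d_B(x)\ge d(x)/2$. So the bound $|N(x)\cap N(y)\cap B|\ge d_B(x)+d_B(y)-|B|$ can be vacuous.
\item ``About $\lfloor n/2\rfloor$'' common neighbours per surplus edge is not enough. You need an exact average of at least $\lfloor n/2\rfloor$ after charging the missing cross edges, which is the obstacle you name but do not resolve.
\end{itemize}
As it stands, no case beyond $q=0$ is proved unconditionally. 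For the paper's purposes, citing the theorem is the right move.
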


\begin{lemma}\label{lem:ordinary-surplus}
Let $H$ be a graph on $n\ge 3$ vertices and put $k=\lfloor n/2\rfloor+1$. If $\delta(H)\ge k$, then
\[
   t(H)\ge \frac{n^2-1}{8}+\sum_{v\in V(H)}p_2\bigl(d_H(v)-k+1\bigr).
\]
\end{lemma}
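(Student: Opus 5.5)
The plan is to write $d_H(v)=k+s_v$ with $s_v\ge 0$, so that $p_2(d_H(v)-k+1)=s_v(s_v+1)/2$, and to put $S=\sum_v s_v$. The upper bound $d_H(v)\le n-1$ gives $s_v\le n-1-k$. For odd $n$ this is $s_v\le (n-3)/2$, and for even $n$ it is $s_v\le n/2-2$. I will combine two lower bounds on $t(H)$: the Lov\'asz--Simonovits bound (Lemma~\ref{lem:lovasz-Simonovits}) when the edge surplus is small, and the elementary codegree bound otherwise. Every edge $uv$ lies in at least $d_H(u)+d_H(v)-n$ triangles, so summing over edges gives
\[
3t(H)\ \ge\ \sum_{uv\in E(H)}\bigl(d_H(u)+d_H(v)-n\bigr)=\sum_v d_H(v)^2-n\,e(H).
\]

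\emph{Odd $n$, small surplus.} Here $k=(n+1)/2$ and $e(H)=\frac{n^2-1}{4}+q$ with $q=\frac{n+1}{4}+\frac S2$. If $q<n/2$, Lemma~\ref{lem:lovasz-Simonovits} gives
\[
t(H)\ge q\,\frac{n-1}{2}=\frac{n^2-1}{8}+\frac{n-1}{4}\,S .
\]
It then suffices to check $\frac{n-1}{4}s_v\ge \frac{s_v(s_v+1)}{2}$ for each $v$. This is equivalent to $s_v+1\le (n-1)/2$, which is exactly the bound $s_v\le (n-3)/2$. (The lemma is applied to the integer $q$, i.e.\ when $e(H)-\lfloor n^2/4\rfloor$ is an integer, which it always is.)

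\emph{Odd $n$, large surplus.} If $q\ge n/2$, then $S\ge (n-1)/2$. Expanding the codegree bound with $d_H(v)=k+s_v$ gives
\[
3t(H)\ \ge\ \frac{n(n+1)}{4}+\Bigl(\frac n2+1\Bigr)S+\sum_v s_v^2 .
\]
The target is $\frac{3(n^2-1)}{8}+\frac32\sum_v s_v^2+\frac32 S$. After rearranging, it suffices to show
\[
\frac{n-1}{2}\,S-\frac12\sum_v s_v^2\ \ge\ \frac{(n-3)(n+1)}{8}.
\]
Using $\sum_v s_v^2\le \frac{n-3}{2}S$, the left side is at least $\frac{n+1}{4}S$. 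The inequality therefore holds as soon as $S\ge (n-3)/2$, which is guaranteed in this case.

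\emph{Even $n$.} Here $k=n/2+1$, so $e(H)\ge n^2/4+n/2$, and we are always in the codegree regime. The same expansion gives
\[
3t(H)\ \ge\ \frac{n^2}{2}+n+\Bigl(\frac n2+2\Bigr)S+\sum_v s_v^2 .
\]
The base term $\frac{n^2}{2}+n$ exceeds $\frac{3(n^2-1)}{8}$. For the remaining terms, $\frac12\sum_v s_v^2\le(\frac n4-1)S$ by $s_v\le n/2-2$, so the $S$-terms dominate $\frac32(\sum_v s_v^2+S)$. The case $n=3$ (and any small case where a bound on $s_v$ is vacuous) is checked directly.

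The main obstacle is the odd small-surplus regime. There the codegree bound is too weak: its base term is $n(n+1)/4$ rather than $3(n^2-1)/8$. The proof only works because Lov\'asz--Simonovits supplies exactly $(n^2-1)/8$ at $S=0$ and a linear gain $\frac{n-1}{4}S$, which absorbs the quadratic terms $p_2(s_v+1)$ thanks to the degree cap $s_v\le (n-3)/2$. The case split at $q=n/2$ fits because the codegree argument needs only $S\ge (n-3)/2$.
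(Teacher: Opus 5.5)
Your proposal is correct and follows the paper's proof essentially step for step. Both use the codegree inequality $3t(H)\ge\sum_v d_H(v)^2-n\,e(H)$ with the substitution $d_H(v)=k+s_v$ (the paper's $x_v$, with $S=X$). Both handle even $n$ by the codegree bound alone, using the cap $s_v\le n/2-2$. For odd $n$, both split between Lov\'asz--Simonovits and the codegree bound, and your threshold $q<n/2$ is exactly the paper's condition $X\le m-1$ with $m=(n-1)/2$.
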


\begin{proof}
For an edge $xy\in E(H)$, we have $|N_H(x)\cap N_H(y)|\ge d_H(x)+d_H(y)-n$. Summing this inequality over all edges gives
\begin{equation}\label{eq:degree-triangle}
   3t(H)\ge \sum_{v\in V(H)} d_H(v)^2-n|E(H)|.
\end{equation}

First, suppose that $n=2m$. Then $k=m+1$. Write $x_v=d_H(v)-k\ge 0$, $X=\sum_v x_v$, and $Y=\sum_v x_v^2$. Since $d_H(v)\le 2m-1$, we have $x_v\le m-2$, and therefore $Y\le (m-2)X$. Also $|E(H)|=m(m+1)+X/2$ and
\[
   \sum_v d_H(v)^2=2m(m+1)^2+2(m+1)X+Y.
\]
Substituting these identities into \eqref{eq:degree-triangle} gives
\[
   t(H)\ge \frac{2m(m+1)+(m+2)X+Y}{3}.
\]
Therefore,
\begin{align*}
&t(H)-\sum_v p_2(x_v+1)-\frac{(2m)^2-1}{8}\ge
\frac{4m^2+16m+3}{24}+\frac{(2m+1)X-Y}{6}\ge 0.
\end{align*}
This proves the lemma for $n$ is even.

Now suppose that $n=2m+1$. Again, $k=m+1$, and set $x_v=d_H(v)-k\ge 0$, $X=\sum_v x_v$, and $Y=\sum_v x_v^2$. Here $x_v\le m-1$, so $Y\le (m-1)X$.

If $X\ge m-1$, then \eqref{eq:degree-triangle} yields
\begin{align*}
t(H)-\sum_v p_2(x_v+1)-\frac{m(m+1)}2&\ge
\frac{2mX-Y-(m^2-1)}6\\
&\ge \frac{(m+1)X-(m^2-1)}6\ge 0.
\end{align*}
It remains to consider $X\le m-1$. Since $|E(H)|=m(m+1)+q=\lfloor\frac{n^2}{4}\rfloor+q$ with $q=(m+1+X)/2$, the number $q$ is an integer, and $0\le q\le m<(2m+1)/2,$ Lemma \ref{lem:lovasz-Simonovits} gives
\[
   t(H)\ge qm=\frac{m(m+1)}2+\frac{mX}{2}.
\]
On the other hand, we have $\sum_v p_2(x_v+1)=(X+Y)/2\le mX/2$. Hence
\[
   t(H)\ge \frac{m(m+1)}2+\sum_v p_2(x_v+1),
\]
which is the desired inequality for odd $n$. The proof is complete.
\end{proof}

\begin{proof}[Proof of Theorem \ref{thm:main}\textup{(i)}]
Let $G$ be an edge-colored graph on $n\ge 3$ vertices with $\deltac(G)\ge (n+1)/2$, and put $k=\lfloor n/2\rfloor+1$. Since color degrees are integers, $\dc_G(v)\ge k$ for every vertex $v$. Hence, the underlying ordinary graph of $G$ has minimum degree at least $k$. By Lemma \ref{lem:ordinary-surplus},
\[
   t(G)\ge \frac{n^2-1}{8}+\sum_v p_2(d_G(v)-k+1).
\]
By Lemma \ref{lem:same-color-pairs},
\[
   \rt(G)\ge t(G)-\sum_v p_2(d_G(v)-\dc_G(v)+1).
\]
Since $\dc_G(v)\ge k$ and $d_G(v)\ge \dc_G(v)$, we have
\[
   1\le d_G(v)-\dc_G(v)+1\le d_G(v)-k+1.
\]
The function $p_2(z)=z(z-1)/2$ is increasing for positive integers $z$, so the two preceding inequalities imply $\rt(G)\ge (n^2-1)/8$ for every $n\ge 3$. If $n$ is odd, Lemma \ref{thm:li-odd-upper} gives the reverse inequality. Therefore, $f(n)=(n^2-1)/8$ for every odd $n$.
\end{proof}

\section{\large Proof of Theorem \ref{thm:main}(ii)}\label{sec:proof-even}

In this section, we present the proof of Theorem \ref{thm:main}(ii).
We first record Goodman's formula.

\begin{lemma}[Goodman {\cite{Goodman1959}}]\label{lem:goodman}
Let $F$ be a graph on $n$ vertices, and let $\overline F$ be its complement. Then
\[
  t(F)+t(\overline F)
  =\binom{n}{3}-\frac12\sum_{v\in V(F)}d_F(v)\bigl(N-1-d_F(v)\bigr).
\]
Equivalently,
\[
  t(F)+t(\overline F)
  =\frac12\left(
      \sum_v\binom{d_F(v)}{2}
      +\sum_v\binom{n-1-d_F(v)}{2}
      -\binom n3
    \right).
\]
\end{lemma}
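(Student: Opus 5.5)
The plan is to derive the identity by counting non-monochromatic triples of vertices, i.e., triples that induce neither a triangle in $F$ nor a triangle in $\overline F$. (In the first displayed form, $N$ is presumably $n$.) Colour the edges of $K_n$ red if they lie in $F$ and blue if they lie in $\overline F$. Every triple of vertices is then either monochromatic, which means it is a triangle of $F$ or of $\overline F$, or it has exactly two edges of one colour and one of the other. Hence
\[
t(F)+t(\overline F)=\binom n3-B,
\]
where $B$ is the number of bichromatic triples.

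To compute $B$, I will count \emph{bichromatic angles}. These are pairs $(v,\{x,y\})$ with $vx\in E(F)$ and $vy\notin E(F)$, or vice versa. At a vertex $v$ there are exactly $d_F(v)\bigl(n-1-d_F(v)\bigr)$ such angles. A monochromatic triple contains no bichromatic angle. A bichromatic triple contains exactly two: if the colour pattern is red, red, blue, then the two vertices incident to the blue edge each see one red and one blue edge, while the third vertex sees two reds. Therefore
\[
2B=\sum_v d_F(v)\bigl(n-1-d_F(v)\bigr),
\]
and this gives the first form.

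For the equivalent second form, I will use the same double count, now applied to monochromatic angles. A monochromatic triple has $3$ monochromatic angles and a bichromatic triple has $1$. The total number of monochromatic angles is $\sum_v\binom{d_F(v)}{2}+\sum_v\binom{n-1-d_F(v)}{2}$. Writing $M=t(F)+t(\overline F)$, this yields $3M+B$ equal to that sum. Combining with $M+B=\binom n3$ gives $2M=\sum_v\binom{d_F(v)}2+\sum_v\binom{n-1-d_F(v)}2-\binom n3$, which is the stated expression.

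There is no real obstacle in this argument. The only step needing care is checking that each bichromatic triple contributes exactly two bichromatic angles and exactly one monochromatic angle.
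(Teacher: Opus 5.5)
Your proof is correct. It is the standard double-counting proof of Goodman's identity. The paper gives no proof of this lemma and simply cites Goodman, so there is no competing argument to compare against.

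The bichromatic-angle count gives $2B=\sum_v d_F(v)\bigl(n-1-d_F(v)\bigr)$. The monochromatic-angle count gives $3M+B=\sum_v\binom{d_F(v)}{2}+\sum_v\binom{n-1-d_F(v)}{2}$. Together with $M+B=\binom n3$, these yield both displayed forms, with $N$ read as $n$, as you rightly assumed.

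Your first form also gives the version the paper actually uses in Lemma~\ref{lem:colored-reduction}, since $\frac12\sum_v d_F(v)\bigl(n-1-d_F(v)\bigr)=(n-2)e(F)-\sum_v\binom{d_F(v)}{2}$.
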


Chase \cite{Chase2020} proved the following theorem, which resolved a conjecture of Gan, Loh, and Sudakov \cite{GanLohSudakov2015}.

\begin{lemma}[Chase {\cite[Theorem 1]{Chase2020}}]\label{lem:chase}
Let $G$ be a graph on $N$ vertices with maximum degree $D$. Let $N=q(D+1)+r$, where $0\le r\leq D$. Then
\[
  t(G)\le q\binom{D+1}{3}+\binom r3.
\]
\end{lemma}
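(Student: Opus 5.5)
The statement to prove is Chase's theorem: a graph on $N$ vertices with maximum degree $D$, where $N=q(D+1)+r$ and $0\le r\le D$, has at most $q\binom{D+1}{3}+\binom r3$ triangles. I would follow Chase's approach, which is an induction on $N$ driven by a local deletion argument. The bound is attained by $q$ disjoint copies of $K_{D+1}$ together with a $K_r$. The aim is to show that some vertex set can be removed so that the triangle count drops by at most the "share" that the extremal construction assigns to the removed vertices.

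\textbf{Base case.} If $N\le D+1$, then $q=0$ and $r=N$, except when $N=D+1$, where $q=1$ and $r=0$. In either case the claimed bound is $\binom N3$, which is trivial.

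\textbf{Inductive step, first case.} Suppose $N\ge D+2$. If $G$ contains a component isomorphic to $K_{D+1}$, delete it. This removes exactly $\binom{D+1}{3}$ triangles and reduces $q$ by one, leaving $r$ unchanged, so induction finishes this case.

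\textbf{Inductive step, second case.} Otherwise, delete a single vertex $v$. The number of triangles through $v$ is $e(N(v))\le\binom{D}{2}$. The difficulty is that removing one vertex shifts the residue $r$ by one, so the allowance changes in an uneven way:
\begin{itemize}
\item when $r\ge1$, it changes by $\binom r3-\binom{r-1}3=\binom{r-1}{2}$;
\item when $r=0$, it changes by $\binom{D+1}{3}-\binom D3=\binom D2$.
\end{itemize}
So a single vertex deletion suffices only when some vertex lies in at most $\binom{r-1}{2}$ triangles. When $r$ is small this generally fails, and one must instead delete a set $S$ of $D+1$ vertices spanning at most $\binom{D+1}3$ triangles counted with multiplicity of incidence to $S$. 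In other words, one needs
\[
\sum_{v\in S}t(v)-(\text{triangles with }\ge2\text{ vertices in }S\text{, appropriately corrected})\le\binom{D+1}3.
\]

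\textbf{Main obstacle.} The key lemma, which is the heart of Chase's proof, is the following. In a graph with maximum degree $D$ having no $K_{D+1}$ component, take a vertex $v$ together with its neighbourhood, and consider the set $S$ of $v$ and the vertices in the "clique-like" part around it. One shows that deleting $S$ destroys at most $\binom{|S|}{3}$-type amounts, while the allowance shift is compatible with this.

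I would try to choose $v$ maximising the number of triangles through it, and let $S=N[v]$. Each triangle meeting $S$ is then charged to its vertices in $S$. The charge on a vertex $w\in S$ is bounded using $t(w)\le t(v)\le e(N(v))$ together with the degree bound $D$. Since $N[v]$ is not a $K_{D+1}$ component, there are missing edges or outgoing edges, and each such defect reduces either the number of triangles inside $N[v]$ or the number of edge slots available for triangles leaving it.

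Balancing this accounting exactly against $\binom{D+1}3$, including the residue bookkeeping when $r>0$, is where I expect the real work to lie. A careful charging or convexity argument is needed here, and this is precisely the content of Chase's paper. Since the result is quoted from Chase's work, in this paper I would simply cite it rather than reprove it.
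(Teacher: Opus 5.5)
The paper does not prove this lemma. It is quoted directly from Chase's Theorem~1, and the paper's only addition is the extension to $\Delta\le D$ in Lemma~\ref{lem:chase-revised}. Your final decision to cite it rather than reprove it is therefore exactly what the paper does, and as a citation the proposal is fine.

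The outline before the citation, however, is not a proof, and you should not present it as one. Everything you actually verify is routine: the case $N\le D+1$, removal of a $K_{D+1}$ component, and the change in the allowance. (The case $r=0$ is even immediate from $3t(G)=\sum_v e(G[N(v)])\le N\binom D2=3q\binom{D+1}3$.)

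Your strategy needs the following whenever $1\le r\le D$ and there is no $K_{D+1}$ component: either a vertex lying in at most $\binom{r-1}2$ triangles, or a set $S$ of exactly $D+1$ vertices with $t(G)-t(G-S)\le\binom{D+1}3$. Here each triangle meeting $S$ is counted once, not with multiplicity. Nothing in your sketch establishes this, and an argument of that strength is the real content of Chase's theorem.

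Your concrete candidate $S=N[v]$, with $v$ lying in the most triangles, need not even have the right size. Take $D=4$ and $G=K_4\cup B\cup K_3$, where $B$ is two triangles sharing a vertex. Then $N=12$ and $r=2$, every vertex lies in a triangle, and there is no $K_5$ component. Yet every triangle-maximizing vertex lies in the $K_4$, so $|N[v]|=4\ne D+1$ and your residue bookkeeping does not apply.

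Finally, the induction has to be run over graphs with $\Delta\le D$ rather than $\Delta=D$. Deleting a vertex or a $K_{D+1}$ component can lower the maximum degree, so the hypothesis as stated would not be available.
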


We also need the following variant of Chase's result.

\begin{lemma}\label{lem:chase-revised}
Let $G$ be a graph on $N$ vertices with maximum degree at most $D$. Let $n=q(D+1)+r$, where $0\le r\le D$. Then
\[
  t(H)\le q\binom{D+1}{3}+\binom r3.
\]
\end{lemma}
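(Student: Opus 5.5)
The variant differs from Lemma~\ref{lem:chase} only in that the maximum degree is assumed to be \emph{at most} $D$ rather than exactly $D$. (Here $n$ should be read as $N$ and $t(H)$ as $t(G)$.) I will deduce it from Lemma~\ref{lem:chase} by showing that the bound
\[
g(N,D)=q\binom{D+1}{3}+\binom r3, \qquad N=q(D+1)+r,\ 0\le r\le D,
\]
is nondecreasing in $D$ for fixed $N$. Then, if $G$ has actual maximum degree $D'\le D$, Lemma~\ref{lem:chase} gives $t(G)\le g(N,D')\le g(N,D)$.

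The degenerate case $D'=0$ is trivial, since then $t(G)=0\le g(N,D)$. For $D'\ge 1$, Lemma~\ref{lem:chase} applies directly with $D'$ in place of $D$.

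\textbf{Monotonicity via a per-vertex reformulation.} Write $g(N,D)=\sum_{v} w(v)$ over $N$ slots grouped into $q$ blocks of size $D+1$ and one block of size $r$. A vertex in a block of size $s$ receives weight $\binom{s-1}{2}/3$, which is the number of triangles through it in $K_s$ divided by $3$. So $g(N,D)=\frac13\sum_v\binom{s_v-1}{2}$, and $g(N,D)$ is the triangle count of the disjoint union $qK_{D+1}\cup K_r$.

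\textbf{Comparing $D$ and $D+1$.} The aim is to show $g(N,D)\le g(N,D+1)$ by exhibiting the graph $qK_{D+1}\cup K_r$ as a subgraph of a graph with block structure $q'K_{D+2}\cup K_{r'}$, where $N=q'(D+2)+r'$. Concretely, I will fill blocks of size $D+2$ greedily with the same vertices. Each vertex's block grows or stays the same, except possibly vertices of the last partial block.

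This exchange step is the main obstacle. The cleanest route avoids it: the graph $qK_{D+1}\cup K_r$ has maximum degree at most $D\le D+1$, so it is a candidate in the maximization for $D+1$. To finish, I need the fact implicit in Chase's theorem that $g(N,D+1)$ is the maximum of $t$ over \emph{all} graphs of maximum degree at most $D+1$, not just those of maximum degree exactly $D+1$. That is exactly what we are trying to prove, so this argument is circular.

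Instead I will argue directly by induction on $D-D'$. Given $G$ with $\Delta(G)=D'<D$ and $N\ge D'+2$, add an edge between two vertices of degree less than $D'+1$ if such a pair exists; adding edges never decreases $t$. Otherwise I will argue combinatorially that $g(N,D')\le g(N,D'+1)$ by comparing the two explicit formulas. Moving one vertex from the partial block into a full block changes the sum by $\binom{D'+1}{2}-\binom{r-1}{2}\ge 0$, repeated appropriately. I will verify this with a short computation, handling $N<D+1$ separately, where $g=\binom N3\ge t(G)$ trivially.
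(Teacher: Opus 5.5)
Your proposal never actually closes the argument; both routes you start end in a deferred step.

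The monotonicity route (prove $g(N,D')\le g(N,D'+1)$, then apply Lemma~\ref{lem:chase} at the true maximum degree) is sound in principle. However, you check only one exchange: moving a vertex from the partial block of size $r$ into a block of size $D'+1$. That suffices only when $r\ge q$. When $r<q$, the partial block is used up before all blocks reach size $D'+2$, so full blocks have to be broken up. Here your embedding idea fails outright: for $N=9$, $D'=2$ we have $3K_3\not\subseteq 2K_4\cup K_1$. Closing this needs a genuine convexity (majorization) argument for $\sum_i\binom{s_i}{3}$ over partitions of $N$ into parts of size at most $D'+2$. The phrases ``repeated appropriately'' and ``I will verify this with a short computation'' sit exactly where that argument should be.

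The missing idea, which is how the paper proves the lemma, makes monotonicity unnecessary. If $q=0$, the bound is $\binom N3$ and is trivial. If $q\ge 1$, then $N\ge D+1$. If moreover $\Delta(G)=D'<D$, a vertex $v$ of degree $D'$ has $N-1-D'\ge D-D'$ non-neighbors, each of degree at most $D'$. Joining $v$ to $D-D'$ of them gives a supergraph $\Gamma$ with $\Delta(\Gamma)=D$ exactly, since each new neighbor ends with degree at most $D'+1\le D$. As $t(G)\le t(\Gamma)$, Lemma~\ref{lem:chase} applies with the original $D$, $q$, $r$.

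Your last paragraph gestures at this, but two things go wrong. First, a single edge between two vertices of degree less than $D'+1$ need not raise the maximum degree (both ends may have degree below $D'$), so your induction on $D-D'$ does not advance as stated. Second, you treat the existence of a suitable pair as uncertain and send the ``otherwise'' case back to the unproved monotonicity. That case is in fact vacuous: when $N\ge D'+2$, every vertex, in particular one of maximum degree, has a non-neighbor. Recognizing this, and choosing a maximum-degree endpoint, is what completes the proof.
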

\begin{proof}
If $\Delta(G)=D$, the bound follows directly from Lemma~\ref{lem:chase}. 
Thus, we assume $\Delta(G)<D$.
If $q=0$, then $N=r\le D$ and the right-hand side equals $\binom{r}{3}$. 
So $t(G)\le\binom{r}{3}$ holds trivially.
We may therefore suppose $q\ge 1$, which gives $n\ge D+1$. 
Let $v$ be a vertex of maximum degree in $G$. 
Because $n\ge D+1$, we can add $D-\deg_G(v)$ edges incident to $v$ (and to other vertices if needed) to obtain a supergraph $\Gamma$ on the same vertex set with $\Delta(\Gamma)=D$. 
Adding edges cannot decrease the number of triangles, so $t(G)\le t(\Gamma)$. 
Applying Lemma~\ref{lem:chase} to $\Gamma$ yields $t(\Gamma)\le q\binom{D+1}{3}+\binom{r}{3}$, and the proof is complete.
\end{proof}

We use Fisher’s
triangle lower bound in the following 
form.

\begin{lemma}[Fisher {\cite{Fisher1989}}]\label{lem:fisher}
Let $G$ be a graph with $N$ vertices and $E$ edges. If $N^2/4\le E\le N^2/3$, then
\[
  t(G)\ge
  \frac{9EN-2N^3-2(N^2-3E)^{3/2}}{27}.
\]
\end{lemma}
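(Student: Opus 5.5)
Since Lemma~\ref{lem:fisher} is quoted from Fisher~\cite{Fisher1989}, the natural plan is to reconstruct Fisher's argument rather than derive it from the earlier tools in the paper. Throughout, put $k_1=N$, $k_2=E$, $k_3=t(G)$.

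First identify where the right-hand side comes from. It equals the number of triangles in a ``real-weighted'' complete $3$-partite graph with part sizes $a,a,b$ satisfying $2a+b=N$ and $a^2+2ab=E$. Indeed, solving gives $a=\bigl(N+\sqrt{N^2-3E}\bigr)/3$ and $b=N-2a$. Then $a^2b$ expands to
\[
  \frac{9EN-2N^3-2(N^2-3E)^{3/2}}{27}.
\]
The hypothesis $N^2/4\le E\le N^2/3$ is exactly the condition under which $0\le b\le a$. So the lemma is equivalent to the following claim: among triples $(r_1,r_2,r_3)$ with $\sum r_i=N$ and $\sum_{i<j}r_ir_j=E$, the minimum of $r_1r_2r_3$ equals $t$ of this extremal configuration, and the actual graph must do at least as well.

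Second, I would attach to $G$ the cubic
\[
  g(x)=x^3-Nx^2+Ex-t(G),
\]
a truncation of the clique polynomial. The goal is to show that $g$ has three real roots $r_1\ge r_2\ge r_3\ge 0$. Granting this, the elementary symmetric functions of the roots are $N$, $E$ and $t(G)$. Minimising $e_3$ with $e_1,e_2$ fixed and all roots real is a one-variable problem: the admissible set is a segment of the circle $\sum r_i^2=N^2-2E$ on the plane $\sum r_i=N$. The extrema of $e_3$ on it occur where two roots coincide, i.e.\ where the discriminant vanishes. Comparing the two double-root configurations shows that the minimum is attained at $r_1=r_2=a$, $r_3=b$, which gives exactly $a^2b$ and proves the bound.

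The main obstacle is the real-rootedness step, since truncated clique polynomials of arbitrary graphs need not be real-rooted. My first attempt would be a deletion/symmetrisation induction. I would remove vertices, or apply Zykov-type moves that never increase $t$ while keeping $N$ and $E$ fixed, until reaching a complete multipartite graph. For such a graph the full clique polynomial $\prod_i(1+s_ix)$ is real-rooted. I would then check that the inequality for $k_3$ is preserved under the truncation, using $k_4\ge 0$ together with the Moon--Moser inequalities between consecutive $k_s$.

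If symmetrisation fails to control $t$ at fixed $E$, the fallback is Fisher's original route. This means proving directly the discriminant-type inequality relating $k_1,k_2,k_3$ (equivalently, that $t$ lies above the lower branch of the curve $\operatorname{disc}(g)=0$). The inequality would be obtained by double counting triangles over edges and vertices, refining the bound $|N(x)\cap N(y)|\ge d(x)+d(y)-N$ used in \eqref{eq:degree-triangle}. That refinement, rather than the final optimisation, is where I expect the real work to lie.
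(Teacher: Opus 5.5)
\textbf{There is a genuine gap.} The paper does not prove this lemma; it cites Fisher and uses the result as a black box. So your reconstruction has to stand on its own, and as written it does not.

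Your identification of the right-hand side is correct. With $a=(N+\sqrt{N^2-3E})/3$ and $b=N-2a$ one has $x^3-Nx^2+Ex-a^2b=(x-a)^2(x-b)$. Hence the lemma is equivalent to $g(a)\le 0$ for $g(x)=x^3-Nx^2+Ex-t(G)$.

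The step you build everything on, that $g$ has three real roots, is false in the range of the lemma. The cubic $x^3-Nx^2+Ex$ has its local maximum at $c=(N-\sqrt{N^2-3E})/3$, with value $c^2d$ where $d=N-2c$, and its local minimum at $a$, with value $a^2b$. So $g$ is real-rooted if and only if $a^2b\le t(G)\le c^2d$. Real-rootedness would therefore also impose an \emph{upper} bound on the number of triangles, and graphs containing a large clique violate it.
\begin{itemize}
\item $K_7\cup K_1$ has $N=8$, $E=21\in[16,64/3]$ and $t=35$, whereas $c^2d=490/27$. Hence $x^3-8x^2+21x-35$ has only one real root.
\item Even complete multipartite graphs fail: $K_{4,2,1,1}$ also has $N=8$, $E=21$, but $t=22>490/27$.
\end{itemize}
So the symmetrization route cannot end in a real-rooted truncated cubic. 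Real-rootedness is not preserved under truncation. Separately, Zykov symmetrization changes $E$, and you give no move that fixes both $N$ and $E$ while never increasing $t$.

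What you actually need is only the one-sided statement $g(a)\le 0$, i.e.\ that the largest real root of $g$ is at least $a$. But that is the lemma restated verbatim, so passing to the roots of $g$ and optimizing $e_3$ does no work.

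Your fallback names this inequality without proving it, and the tool you point to does not reach it. Summing $|N(x)\cap N(y)|\ge d(x)+d(y)-N$ over edges and applying Cauchy--Schwarz gives only $t\ge E(4E-N^2)/(3N)$. With $w=\sqrt{1-3E/N^2}$, Fisher's bound exceeds this by $2w^2(1+w)(1-2w)N^3/27$, which is positive and of order $N^3$ throughout $N^2/4<E<N^2/3$. The two bounds agree only at the endpoints.

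The whole content of the lemma therefore sits in the unspecified ``refinement''. Some genuinely stronger input is required: Fisher's original argument, or Razborov's theorem on the minimum triangle density, of which this is the $[1/2,2/3]$ case. The paper reasonably just cites it.
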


Finally, we need the following special case of Lo's minimum-degree theorem for triangles.

\begin{lemma}[Lo \rm {\cite[Theorem~2.1]{Lo2012}}]\label{lem:lo}
Let $G$ be an $n$-vertex graph with minimum degree at least $\delta$ such that 
$\frac{n}{2} < \delta \le  \frac{2n}{3}$. Then $$t(G)\geq\frac12(1-2\beta)(1-\beta)\beta n^3,$$
where $\beta=1-\frac{\delta}{n}$.
\end{lemma}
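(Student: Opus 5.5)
This is Lo's theorem (a special case of his triangle/clique density result under a minimum-degree condition), cited from the literature. In the paper it is an imported tool, not something proved here. Still, here is how one could establish the stated bound directly.

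We write $\delta=(1-\beta)n$ with $1/3\le\beta<1/2$. The extremal configuration to aim at is the complete $p$-partite-type graph: $n-2(n-\delta)=(1-2\beta)n$ "extra" vertices sit on top of a near-bipartite core. Equivalently, take $\lceil 1/\beta\rceil$ classes of size $\beta n$ plus a remainder class. For $\beta\in[1/3,1/2)$ this is three classes, of sizes $\beta n,\beta n,(1-2\beta)n$. The number of triangles is then $\beta^2(1-2\beta)n^3$, which dominates the stated $\tfrac12(1-2\beta)(1-\beta)\beta n^3$. So the stated bound is weaker than the sharp one and should follow from a simple counting argument rather than Lo's full stability analysis.

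The plan is an edge-by-edge codegree count. For every edge $xy$, $|N(x)\cap N(y)|\ge 2\delta-n=(1-2\beta)n$. Summing over edges gives $3t(G)\ge e(G)(1-2\beta)n$. Using $e(G)\ge \delta n/2=(1-\beta)n^2/2$ we obtain
\[
t(G)\ge \frac{(1-2\beta)(1-\beta)n^3}{6}.
\]
This has the right factor $(1-2\beta)(1-\beta)$ but a constant $1/6$ instead of $\beta/2$. Since $\beta\ge 1/3$, we have $\beta/2\ge 1/6$, so this crude bound falls short by exactly the factor $3\beta\ge 1$. The main obstacle is therefore to gain that factor $3\beta$.

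To do so, I would count triangles through a vertex. Each vertex $v$ has $d(v)\ge\delta$ neighbours, and within $N(v)$ every vertex has at least $\delta-(n-d(v))$ neighbours inside $N(v)$. Hence the number of triangles through $v$ is at least $\tfrac12 d(v)\bigl(d(v)+\delta-n\bigr)$. Summing over $v$, dividing by $3$, and using $d(v)\ge\delta$ gives
\[
t\ge \frac{n\,\delta(2\delta-n)}{6}=\frac{(1-\beta)(1-2\beta)n^3}{6},
\]
which is the same bound, so this alone does not help. The refinement is to iterate inside $N(v)$. The graph $G[N(v)]$ has $\delta n'$-type density, with minimum degree at least $(1-2\beta)n$, so we can apply the codegree count again within $N(v)$. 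Alternatively, Fisher's bound (Lemma~\ref{lem:fisher}) applied to $e\ge(1-\beta)n^2/2$ can be checked to exceed $\tfrac12\beta(1-\beta)(1-2\beta)n^3$ on $[1/3,1/2)$; that reduces everything to a one-variable inequality in $\beta$.

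If that calculus inequality fails near $\beta=1/3$, I would fall back on simply citing \cite[Theorem~2.1]{Lo2012}, as the paper does.
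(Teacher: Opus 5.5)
\textbf{There is a genuine gap: your direct argument does not prove the lemma, and its guiding premise is false.} Falling back on the citation is exactly what the paper does; it imports this lemma from Lo and gives no proof.

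\textbf{The bound is attained, not slack.} Let $H$ be a triangle-free $(1-2\beta)n$-regular graph on $(1-\beta)n$ vertices. When $(1-\beta)n$ is even, take a regular bipartite graph with sides of size $(1-\beta)n/2$; its degree fits precisely because $\beta\ge 1/3$, i.e.\ $(1-2\beta)n\le(1-\beta)n/2$. Now consider $\overline{K_{\beta n}}\vee H$. It is $(1-\beta)n$-regular, and every triangle consists of one edge of $H$ and one vertex of the independent set. So it has exactly $\beta n\cdot e(H)=\frac12\beta(1-\beta)(1-2\beta)n^3$ triangles. For $n=2m$, $\delta=m+1$ this graph is $I_{m-1}\vee C_{m+1}$ with $m^2-1$ triangles, which is the paper's own sharpness construction for even $n$. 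Your tripartite graph $K_{\beta n,\beta n,(1-2\beta)n}$ is therefore not extremal. Since the bound is tight, no argument that loses anything can prove it.

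\textbf{Why neither of your refinements recovers the missing factor $3\beta$.}

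In the extremal graph, every edge between the independent set and $H$ has codegree exactly $2\delta-n$. Only the edges of $H$ have the larger codegree $\beta n$. An averaging argument that uses only $d(v)\ge\delta$ cannot tell these two kinds of edges apart. It therefore stalls at $\frac16(1-\beta)(1-2\beta)n^3$; your computation of that weaker bound is correct.

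Re-applying the codegree bound inside $G[N(v)]$ lower-bounds the triangles of $G[N(v)]$, i.e.\ copies of $K_4$ through $v$. But triangles through $v$ correspond to the \emph{edges} of $G[N(v)]$, and you have already used the best available degree information on those.

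Fisher's bound fails because it sees only the edge count. Take $E=(1-\beta)n^2/2$ and $u=\sqrt{(3\beta-1)/2}$. Then the target minus Fisher's bound equals $\frac{2}{27}u^3(1-2u)^2(1+u)\,n^3$, which is strictly positive for every $\beta\in(1/3,1/2)$. For example, at $\beta=2/5$ Fisher gives about $0.02358\,n^3$ against the target $0.024\,n^3$. For $n=2m$ and $E=m(m+1)$ it gives roughly $m^2-m/4$ instead of $m^2-1$. This is precisely why, in the proof of Lemma~\ref{lem:complement-estimate}, the paper uses Fisher only when $s\ge 1$ and needs Lo for the regular case $s=0$.

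\textbf{What is missing.} You need a genuinely structural argument showing that enough edges have codegree strictly larger than $2\delta-n$; that is the real content of Lo's theorem. Without it, your proposal establishes only $t(G)\ge\frac16(1-\beta)(1-2\beta)n^3$.
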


Using the above lemma, we can prove the following useful lemma.

\begin{lemma}\label{lem:lo-special}
Let $m\ge 3$. Every graph on $2m$ vertices with minimum degree at least $m+1$ contains at least $m^2-1$ triangles.
\end{lemma}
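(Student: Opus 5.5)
The plan is to apply Lemma~\ref{lem:lo} directly with $n=2m$ and $\delta=m+1$. The resulting lower bound should come out to exactly $m^2-1$, so apart from a short computation no further argument should be needed.

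First I check the hypotheses of Lemma~\ref{lem:lo}. Since $n=2m$, we have $\delta=m+1>m=\tfrac n2$. The upper condition $\delta\le \tfrac{2n}{3}$ reads $m+1\le \tfrac{4m}{3}$, which is equivalent to $m\ge 3$. This is precisely where the assumption $m\ge 3$ in the statement is used. A graph with minimum degree larger than $m+1$ still has minimum degree at least $m+1$, so taking $\delta=m+1$ is legitimate.

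Next I compute the parameter and the factors appearing in the bound:
\[
\beta = 1-\frac{m+1}{2m}=\frac{m-1}{2m},\qquad 1-2\beta=\frac1m,\qquad 1-\beta=\frac{m+1}{2m}.
\]
Substituting these into Lemma~\ref{lem:lo} with $n^3=8m^3$ gives
\[
t(G)\ \ge\ \frac12\cdot\frac1m\cdot\frac{m+1}{2m}\cdot\frac{m-1}{2m}\cdot 8m^3 \;=\; m^2-1,
\]
which is the claimed bound.

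The only place requiring care is the boundary check $\delta\le 2n/3$ for small $m$, which holds for all $m\ge 3$. Since the algebra closes exactly, I do not anticipate any real obstacle. In particular, there is no rounding issue, because the bound from Lemma~\ref{lem:lo} is already the integer $m^2-1$.
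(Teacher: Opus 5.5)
Your proposal is correct and matches the paper's proof: apply Lemma~\ref{lem:lo} with $n=2m$ and $\delta=m+1$, note that $m<m+1\le 4m/3$ is exactly where $m\ge 3$ is used, and compute with $\beta=(m-1)/(2m)$ that the bound equals $m^2-1$.
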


\begin{proof}
Let $G$ be a graph on $2m$ vertices with minimum degree at least $m+1$. In Lo's notation, put $\beta=1-(m+1)/(2m)=(m-1)/(2m)$. Then $1-2\beta=1/m$ and $1-\beta=(m+1)/(2m)$. Since $m<m+1\le 4m/3$ for $m\ge 3$, Lemma \ref{lem:lo} applies with $n=2m$ and $\delta=m+1$. Therefore,
\[
  t(H)\ge \frac12(1-2\beta)(1-\beta)\beta(2m)^3=m^2-1.
\]
\end{proof}

We next convert the colored problem into a complement problem.

\begin{lemma}\label{lem:colored-reduction}
Let $G$ be an edge-colored graph on $2m\ge 6$ vertices with $\deltac(G)\ge m+1$. Let $G_0$ be its underlying ordinary graph, and put $F=\overline{G_0}$ and $x_v=d_F(v)$. Then $\Delta(F)\le m-2$, and
\[
  \rt(G)\ge
  \frac{m(m-1)(m+4)}{3}-\bigl(2e(F)+t(F)\bigr).
\]
\end{lemma}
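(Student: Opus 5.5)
The plan is to combine Lemma~\ref{lem:same-color-pairs} with Goodman's formula (Lemma~\ref{lem:goodman}) applied to the pair $G_0,F$. The whole inequality should come out as an exact per-vertex identity.

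First, the degree bound. For every $v$ we have $d_{G_0}(v)\ge \dc_G(v)\ge m+1$. Since $d_{G_0}(v)+d_F(v)=2m-1$, this gives $x_v=d_F(v)\le m-2$, so $\Delta(F)\le m-2$.

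Second, the colored-to-uncolored reduction. By Lemma~\ref{lem:same-color-pairs},
\[
\rt(G)\ge t(G_0)-\sum_v p_2\bigl(d_{G_0}(v)-\dc_G(v)+1\bigr).
\]
Using $\dc_G(v)\ge m+1$ and the monotonicity of $p_2$ on positive integers (exactly as in the proof of Theorem~\ref{thm:main}(i)), each term is at most $p_2(d_{G_0}(v)-m)=p_2(m-1-x_v)$. Here $m-1-x_v\ge 1$, so no convention issue arises. Hence
\[
\rt(G)\ge t(G_0)-\sum_v p_2(m-1-x_v).
\]

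Third, I rewrite $t(G_0)$ via Goodman's formula applied to $F$, whose complement is $G_0$:
\[
t(G_0)=\tfrac12\Bigl(\sum_v\tbinom{x_v}{2}+\sum_v\tbinom{2m-1-x_v}{2}-\tbinom{2m}{3}\Bigr)-t(F).
\]
Since $2e(F)=\sum_v x_v$, the claim reduces to showing that
\[
\sum_v\Bigl[\tfrac12\tbinom{x_v}{2}+\tfrac12\tbinom{2m-1-x_v}{2}-p_2(m-1-x_v)+x_v\Bigr]-\tfrac12\tbinom{2m}{3}
\]
equals $m(m-1)(m+4)/3$. Expanding the bracket as a polynomial in $x=x_v$, the $x^2$ terms cancel. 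The linear terms $-\tfrac{2m-1}{2}x+\tfrac{2m-3}{2}x+x$ also cancel. The constant is $\tfrac{(m-1)(2m-1)}{2}-\tfrac{(m-1)(m-2)}{2}=\tfrac{(m-1)(m+1)}{2}$. Summing over the $2m$ vertices gives $m(m-1)(m+1)$. Subtracting $\tfrac12\binom{2m}{3}=\tfrac{m(m-1)(2m-1)}{3}$ leaves exactly $\tfrac{m(m-1)(m+4)}{3}$, so the inequality holds (indeed with the loss coming only from the second step).

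The only step needing care is the second: one must check that replacing $\dc_G(v)$ by $m+1$ is legitimate. This holds because $1\le d_{G_0}(v)-\dc_G(v)+1\le d_{G_0}(v)-m$ and $p_2$ is increasing on positive integers. The rest is routine algebra, which I expect to close with no slack.
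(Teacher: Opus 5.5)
Your proposal is correct and follows essentially the same route as the paper: bound the non-rainbow triangles by the same-color incident pairs at each vertex to get $\rt(G)\ge t(G_0)-\sum_v\binom{m-1-x_v}{2}$, then express $t(G_0)$ in terms of $F$ via Goodman's formula and observe that the resulting per-vertex expression does not depend on $x_v$. The differences are only cosmetic, and your algebra checks out. You cite Lemma~\ref{lem:same-color-pairs} together with the monotonicity of $p_2$ instead of re-deriving the pair bound. You also use the cherry-counting form of Goodman instead of the inclusion--exclusion identity $t(G_0)=\binom{2m}{3}-(2m-2)e(F)+\sum_v\binom{x_v}{2}-t(F)$.
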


\begin{proof}
Since $\dc_G(v)\ge m+1$, we have $d_{G_0}(v)\geq m+1$. Thus, $x_v=d_F(v)=2m-1-d_{G_0}(v)\le m-2$, and so $\Delta(F)\le m-2$.

At a vertex $v$, the graph $G_0$ has $2m-1-x_v$ incident edges, and these edges use at least $m+1$ colors. If $d$ objects are placed into at least $q$ nonempty classes, then the number of unordered pairs lying in a common class is at most $\binom{d-q+1}{2}$. Hence, the number of same-color unordered pairs of incident edges at $v$ is at most $\binom{m-1-x_v}{2}$. Every non-rainbow triangle
contains a pair of incident edges of the
same color. Counting
all such same-color incident pairs over all vertices may count a non-rainbow triangle more
than once, but this gives a valid upper bound on the number of non-rainbow triangles. So
\[
  \rt(G)
  \ge
  t(G_0)-\sum_v\binom{m-1-x_v}{2}.
\]
By inclusion-exclusion, equivalently, by Lemma \ref{lem:goodman},
\[
  t(G_0)
  =\binom{2m}{3}-(2m-2)e(F)+\sum_v\binom{x_v}{2}-t(F).
\]
Substituting this identity and using
\[
  \binom{x}{2}-\binom{m-1-x}{2}
  =(m-2)x-\binom{m-1}{2},
  \qquad
  \sum_vx_v=2e(F),
\]
we obtain
\begin{align*}
  \rt(G)
  &\ge
  \binom{2m}{3}-(2m-2)e(F)
  +2(m-2)e(F)-2m\binom{m-1}{2}-t(F) \\
  &=\frac{m(m-1)(m+4)}{3}-\bigl(2e(F)+t(F)\bigr).
\end{align*}
\end{proof}

The required lower bound is reduced to the following ordinary graph estimate.

\begin{lemma}\label{lem:complement-estimate}
Let $m\ge 3$, and let $F$ be a graph on $2m$ vertices with $\Delta(F)\le m-2$. Then
\[
  2e(F)+t(F)\le \frac{m^3-4m+3}{3}.
\]
\end{lemma}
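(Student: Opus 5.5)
The plan is to bound $t(F)$ in two different ways and split according to $e:=e(F)$. Since $\Delta(F)\le m-2$, we always have $e\le m(m-2)$. Equality in the statement should hold for $F=\overline{G_0}$ with $G_0=I_{m-1}\vee C_{m+1}$, that is, $F=K_{m-1}\cup\overline{C_{m+1}}$. This $F$ is $(m-2)$-regular on the $m+1$ cycle vertices. So the target is tight in the high-edge regime, and the argument there must be sharp.

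\emph{Low-edge regime.} Write $2m=2(m-1)+2$ and apply Lemma~\ref{lem:chase-revised} with $D=m-2$. This gives $t(F)\le 2\binom{m-1}{3}$. Hence $2e+t(F)\le \frac{m^3-4m+3}{3}$ as soon as
\[
2e\le \frac{m^3-4m+3}{3}-\frac{(m-1)(m-2)(m-3)}{3}=2m^2-5m+3,
\]
that is, whenever $e\le (m-1)(2m-3)/2$. Using Chase alone is off by $3m-9$ at $e=m(m-2)$, so a second bound is needed for large $e$.

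\emph{High-edge regime.} Let $G_0=\overline F$, so $\delta(G_0)\ge m+1$. Goodman's formula (Lemma~\ref{lem:goodman}), as used in Lemma~\ref{lem:colored-reduction}, gives
\[
t(F)=\binom{2m}{3}-(2m-2)e+\sum_v\binom{x_v}{2}-t(G_0),
\]
where $x_v=d_F(v)$. Hence
\[
2e+t(F)=\binom{2m}{3}-(2m-4)e+\sum_v\binom{x_v}{2}-t(G_0).
\]
Since $x_v\le m-2$, we get $\sum_v\binom{x_v}{2}\le \frac{m-3}{2}\sum_v x_v=(m-3)e$. So the quantity is at most $\binom{2m}{3}-(m-1)e-t(G_0)$. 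Plugging in Lemma~\ref{lem:lo-special} ($t(G_0)\ge m^2-1$) handles exactly the case $e=m(m-2)$, which is where the extremal example lives. For smaller $e$ I would instead use Fisher's bound (Lemma~\ref{lem:fisher}) for $t(G_0)$, with $N=2m$ and $E=\binom{2m}{2}-e$. The hypothesis $N^2/4\le E\le N^2/3$ should hold because $E\ge m(m+1)$; if $E$ exceeds $N^2/3$, one can delete edges or use the monotone case. The resulting expression $\binom{2m}{3}-(m-1)e-\text{Fisher}(E)$ is then a function of $e$ alone, and it must be shown to be at most $\frac{m^3-4m+3}{3}$ for $e\ge (m-1)(2m-3)/2$. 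Since Fisher's bound is convex in $E$, I would check this at the endpoints of the interval and use concavity or convexity of the difference, supplemented by the Lo bound near $e=m(m-2)$.

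\emph{Main obstacle.} The intermediate range of $e$, between about $m^2-\frac52m$ and $m^2-2m$, is where I expect the difficulty. Here neither Chase nor the crude estimate $\sum_v\binom{x_v}{2}\le(m-3)e$ is tight, and Fisher's bound has to compensate. If the estimates do not meet, I would first sharpen $\sum_v\binom{x_v}{2}$ through convexity: given $e$ and the cap $x_v\le m-2$, the sum is maximized with as many vertices of degree $m-2$ as possible. I would also use the fact that $t(G_0)\ge m^2-1$ holds uniformly, and take the better of the Lo and Fisher bounds pointwise. Small values of $m$, such as $m=3,4$, may need direct verification, since there the interval is tiny and $\Delta(F)\le 1$ or $2$ makes $F$ a matching or a union of paths and cycles.
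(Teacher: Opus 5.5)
Your skeleton matches the paper's: Lemma~\ref{lem:chase-revised} for $e(F)\le (m-1)(2m-3)/2$, Goodman plus Lemma~\ref{lem:lo-special} at $e(F)=m(m-2)$, and Goodman plus Fisher in between. The gap is in the intermediate range, and it is a step that fails, not an unfinished computation. The estimate you commit to, $\sum_v\binom{x_v}{2}\le (m-3)e$, loses too much. With $e=m(m-2)-s$ and $b_m=(m^3-4m+3)/3$, it gives $2e+t(F)\le b_m+(m^2-1)+(m-1)s-t(G_0)$, so you would need $t(G_0)\ge m^2-1+(m-1)s$. Fisher's bound (the quantity $L_m(s)$ of Lemma~\ref{lem:algebra}) is approximately $m^2-\frac m4+(m-\frac12)s$, so it misses this by roughly $m/4$. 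Concretely, for $m=10$, $s=1$ it yields only $t(G_0)\ge 106.78$, hence $t(G_0)\ge 107$, whereas you need $108$. Lo's bound $m^2-1$ is weaker still, so taking the better of the two pointwise does not rescue the argument.

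The refinement you mention only as a fallback is therefore the indispensable idea, and it is exactly the paper's. With $y_v=m-2-x_v$ one has $\sum_v y_v=2s$ and $\sum_v y_v^2\le 4s^2$; this is all the deficiency on one vertex, i.e.\ your convexity extreme point. It gives $\sum_v\binom{x_v}{2}\le m(m-2)(m-3)-(2m-5)s+2s^2$ and reduces the requirement to $t(G_0)\ge m^2+2s^2+s-1$.

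The remaining substance is then to prove $L_m(s)\ge m^2+2s^2+s-1$ for $1\le s\le (m-3)/2$ and $m\ge 6$. The difference is concave in $s$; note that Fisher's bound is concave in $E$, not convex as you wrote. So it suffices to check $s=1$ and $s=(m-3)/2$, each by squaring and a polynomial positivity check. The point $s=0$ must be kept out of the Fisher interval, since $L_m(0)\approx m^2-m/4<m^2-1$; that is exactly where Lo is used.

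Some further remarks on the ranges. An integer $s$ with $1\le s<(m-3)/2$ forces $m\ge 6$, so for $m=4,5$ only the Lo case occurs. In the Fisher range, $m(m+1)+s\le 4m^2/3$ holds, so no edge deletion is needed. (Minor: at $e=m(m-2)$, Chase alone overshoots by $m-3$, not $3m-9$.)
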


Before proving Lemma \ref{lem:complement-estimate}, we isolate the algebraic estimate needed after applying Fisher's theorem.

\begin{lemma}\label{lem:algebra}
Let $m\ge 6$ and $1\le s\le (m-3)/2$. Define
\[
  L_m(s)=
  \frac{
    18m\bigl(m(m+1)+s\bigr)-16m^3
    -2\bigl(m^2-3m-3s\bigr)^{3/2}
  }{27}.
\]
Then $L_m(s)\ge m^2+2s^2+s-1$.
\end{lemma}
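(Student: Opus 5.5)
The plan is to clear the denominator, remove the $3/2$-power with a linear upper bound on $\sqrt{A}$, and then check a polynomial inequality that is concave in $s$. Write $A=m^2-3m-3s$. Expanding the numerator of $L_m(s)$ gives $18m(m^2+m+s)-16m^3=2m^3+18m^2+18ms$. Multiplying the desired inequality by $27$ and rearranging, it is equivalent to
\[
  2A^{3/2}\le R,\qquad R:=2m^3-9m^2+18ms-54s^2-27s+27 .
\]
First I would note that $A>0$ in the admissible range. Indeed, $s\le (m-3)/2$ gives $A\ge m^2-3m-\tfrac32(m-3)>0$ for $m\ge 6$, so $A^{3/2}$ is a genuine real quantity.

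To remove the $3/2$-power, I would use a tangent-line type bound on $\sqrt{A}$. Observe that
\[
  \Bigl(m-\tfrac32-\tfrac{3s}{2m}\Bigr)^2=A+\tfrac94+\tfrac{9s}{2m}+\tfrac{9s^2}{4m^2}\ge A .
\]
Since $m-\tfrac32-\tfrac{3s}{2m}>0$, this gives $\sqrt{A}\le m-\tfrac32-\tfrac{3s}{2m}$. Multiplying by $A\ge 0$ and expanding,
\[
  2A^{3/2}\le 2A\Bigl(m-\tfrac32-\tfrac{3s}{2m}\Bigr)=2m^3-9m^2-9ms+9m+18s+\frac{9s^2}{m}.
\]
It therefore suffices to prove $D(s)\ge 0$, where
\[
  D(s):=R-\Bigl(2m^3-9m^2-9ms+9m+18s+\tfrac{9s^2}{m}\Bigr)=27ms-54s^2-45s-\frac{9s^2}{m}+27-9m .
\]

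For fixed $m$, $D$ is a concave quadratic in $s$. Hence it suffices to check the two endpoints of the interval $1\le s\le (m-3)/2$.

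At $s=1$ we get $D(1)=18m-72-9/m$, which is positive for $m\ge 6$.

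At $s=(m-3)/2$, group the first three terms as $(m-3)\bigl[\tfrac{27m}{2}-\tfrac{27(m-3)}{2}-\tfrac{45}{2}\bigr]=18(m-3)$. This gives
\[
  D\Bigl(\tfrac{m-3}{2}\Bigr)=9m-27-\frac{9(m-3)^2}{4m}\ge 9(m-3)-\tfrac94(m-3)>0 .
\]
The interval is nonempty exactly when $m\ge 5$, so both endpoint checks are legitimate. Concavity then gives $D\ge 0$ on the whole interval, which yields the lemma.

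The main obstacle is choosing the linear bound on $\sqrt{A}$ tightly enough. The cruder bound $\sqrt{A}\le m-\tfrac32$ loses the $-9ms$-type term, and then the endpoint $s=1$ fails for large $m$. Keeping the correction $-\tfrac{3s}{2m}$, and then using concavity in $s$ instead of bounding $s$-terms separately, is what I expect to make the estimate close. If a constant turns out tight in a case, the fallback is to square both sides after checking $R>0$ and compare $4A^3\le R^2$ as polynomials in $s$.
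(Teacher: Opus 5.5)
Your proof is correct. It shares the paper's overall ``concavity plus endpoints'' structure, but you handle the key step differently.

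The paper works directly with $\Phi_m(s)=L_m(s)-(m^2+2s^2+s-1)$. It differentiates the irrational term to get $\Phi_m''(s)=-4-\frac{1}{2\sqrt{m^2-3m-3s}}<0$. It then verifies the exact endpoint inequalities $2A^{3/2}\le R$ at $s=1$ and $s=\frac{m-3}{2}$ by squaring both sides and factoring the resulting polynomial differences (e.g.\ $\frac{27}{4}(m-4)(3m^3-12m^2+8m-28)$ at $s=1$).

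You instead first majorize $\sqrt{A}$ by the completed-square bound $m-\frac32-\frac{3s}{2m}$. This replaces the target by an explicit quadratic $D(s)$ with leading coefficient $-(54+\frac9m)$, so concavity is immediate. The endpoint checks then reduce to $18m-72-\frac9m>0$ and $D\bigl(\frac{m-3}{2}\bigr)\ge\frac{27}{4}(m-3)>0$.

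Your route gives up a little slack: to leading order $27\Phi_m(1)\approx\frac{81}{4}m$, while $D(1)\approx 18m$. In exchange it avoids both the calculus on $A^{3/2}$ and the squaring computations. The paper's route is more mechanical and needs no well-chosen majorant.

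One small correction to your closing remark. With the crude bound $\sqrt{A}\le m-\frac32$, the endpoint $s=1$ is still fine, since it gives $15m-63>0$. It is the upper endpoint $s=\frac{m-3}{2}$ that fails for large $m$, where the corresponding value is $\frac{m-3}{2}(45-3m)+27-9m$. This does not affect your argument.
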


\begin{proof}
Consider $s$ as a real variable and set $\Phi_m(s)=L_m(s)-(m^2+2s^2+s-1)$. On the interval $1\le s\le (m-3)/2$, we have $m^2-3m-3s\ge (m-3)(2m-3)/2>0$. A direct differentiation gives
\[
  \Phi_m''(s)=-4-\frac{1}{2\sqrt{m^2-3m-3s}}<0.
\]
Thus, $\Phi_m$ is concave in this interval. Since $\Phi_m$ is concave, for every $s$ in the interval the value $\Phi_m(s)$ is at least the linear interpolation of its two endpoint values. Thus, it is enough to prove $\Phi_m(1)\geq 0$ and $\Phi_m((m-3)/2)\geq 0$.

For $s=1$,
\[
  27\Phi_m(1)
  =2m^3-9m^2+18m-54
  -2(m^2-3m-3)^{3/2}.
\]
It suffices to prove
\[
  (m^2-3m-3)^{3/2}
  \le
  \frac{2m^3-9m^2+18m-54}{2}.
\]
The right-hand side is positive for $m\ge 6$. After squaring, the difference between the square of the right-hand side and the square of the left-hand side is
\[
  \frac{27}{4}(m-4)(3m^3-12m^2+8m-28)>0.
\]
Indeed, $3m^3-12m^2+8m-28$ is positive at $m=6$, and its derivative $9m^2-24m+8$ is positive for $m\ge 6$. Hence $\Phi_m(1)\ge 0$.

For $s=(m-3)/2$, it suffices to prove
\[
  \left(\frac{(m-3)(2m-3)}2\right)^{3/2}
  \le
  \frac{(m-3)(4m^2-15m+36)}4.
\]
The right-hand side is positive for $m\ge 6$. After squaring, the difference between the square of the right-hand side and the square of the left-hand side is
\[
  \frac{27}{16}(m-3)^2(7m^2-26m+42)>0.
\]
Since the quadratic
$7m^2-26m+42$ has negative
discriminant and positive leading
coefficient, it is positive for all real $m$. Therefore, the second endpoint is also non-negative, and the lemma follows by concavity.
\end{proof}

\begin{proof}[Proof of Lemma \ref{lem:complement-estimate}]
Put $b_m=(m^3-4m+3)/3$. If $m=3$, then $\Delta(F)\le 1$, so $t(F)=0$ and $e(F)\le 3$. Hence $2e(F)+t(F)\le 6=b_3$. We henceforth assume that $m\ge 4$.

Let $E_0=(m-1)(2m-3)/2$. First, suppose that $e(F)\le E_0$. Apply Lemma \ref{lem:chase-revised} with $N=2m$ and $D=m-2$. Since $m\ge 4$, we have $2m=2(m-1)+2$, so $q=2$ and $r=2$. Therefore $t(F)\le 2\binom{m-1}{3}$, and consequently
\begin{align*}
  2e(F)+t(F)
  &\le 2E_0+2\binom{m-1}{3} \\
  &=(m-1)(2m-3)+\frac{(m-1)(m-2)(m-3)}3=b_m.
\end{align*}

It remains to consider $e(F)>E_0$. Since $\Delta(F)\le m-2$, the maximum possible number of edges is $m(m-2)$. Write $e(F)=m(m-2)-s$, where $s\ge 0$ is an integer. The inequality $e(F)>E_0$ gives
\begin{equation}\label{eq:s-range}
  s<\frac{m-3}{2}.
\end{equation}

If $s=0$, then $e(F)=m(m-2)$, so every vertex of $F$ has degree exactly $m-2$. Put $G_0=\overline F$. Then $G_0$ is an $(m+1)$-regular graph on $2m$ vertices. Lemma \ref{lem:lo-special} gives $T(G_0)\ge m^2-1$. By Lemma \ref{lem:goodman},
\[
  t(F)+t(G_0)=\binom{2m}{3}-m(m-2)(m+1),
\]
where the second term follows from
\[
  \frac12\sum_vd_F(v)(2m-1-d_F(v))=m(m-2)(m+1).
\]
Therefore
\[
  2e(F)+t(F)
  \le 2m(m-2)+\binom{2m}{3}
       -m(m-2)(m+1)-(m^2-1)=b_m.
\]

Finally, suppose that $s\ge 1$. Since $s$ is an integer, \eqref{eq:s-range} implies $m\ge 6$. Let $y_v=(m-2)-d_F(v)\ge 0$. Since $e(F)=m(m-2)-s$, we have $\sum_v y_v=2s$, and hence
\begin{equation}\label{eq:y-square}
  \sum_v y_v^2\le \left(\sum_vy_v\right)^2=4s^2.
\end{equation}
Moreover, $d_F(v)=m-2-y_v$ and $2m-1-d_F(v)=m+1+y_v$. Thus
\[
  d_F(v)(2m-1-d_F(v))
  =(m-2)(m+1)-3y_v-y_v^2.
\]
Summing over all vertices and using \eqref{eq:y-square} gives
\begin{equation}\label{eq:goodman-degree-lower}
\begin{aligned}
  \sum_vd_F(v)(2m-1-d_F(v))
  &=2m(m-2)(m+1)-6s-\sum_vy_v^2 \\
  &\ge 2m(m-2)(m+1)-6s-4s^2.
\end{aligned}
\end{equation}

Put $G_0=\overline F$. Then $e(G_0)=\binom{2m}{2}-e(F)=m(m+1)+s$. In the range $1\le s<(m-3)/2$, we have
\[
  m^2\le m(m+1)+s\le \frac{4m^2}{3}.
\]
The lower bound is
immediate, and the upper bound
follows from
\[
  m(m+1)+s<m^2+m+\frac{m-3}{2}
  =\frac{2m^2+3m-3}{2}\le \frac{4m^2}{3}
\]
for $m\ge 6$. Therefore, Lemma \ref{lem:fisher} applies to $G_0$ with $N=2m$ and $E=m(m+1)+s$. It gives $T(G_0)\ge L_m(s)$, and Lemma \ref{lem:algebra} yields
\begin{equation}\label{eq:fisher-processed}
  T(G_0)\ge m^2+2s^2+s-1.
\end{equation}
Using Goodman's formula again,
\[
  t(F)=\binom{2m}{3}
    -\frac12\sum_vd_F(v)(2m-1-d_F(v))-T(G_0).
\]
Combining this identity with \eqref{eq:goodman-degree-lower} and \eqref{eq:fisher-processed}, we get
\[
  t(F)
  \le \binom{2m}{3}
    -\frac12\bigl(2m(m-2)(m+1)-6s-4s^2\bigr)
    -(m^2+2s^2+s-1).
\]
Consequently
\begin{align*}
  2e(F)+t(F)
  &\le 2(m(m-2)-s)+\binom{2m}{3}-m(m-2)(m+1) \\
  &\qquad +3s+2s^2-(m^2+2s^2+s-1) \\
  &=2m(m-2)+\binom{2m}{3}-m(m-2)(m+1)-(m^2-1) \\
  &=b_m.
\end{align*}
This completes the proof.
\end{proof}

\begin{proof}[Proof of Theorem \ref{thm:main}\textup{(ii)} for $n\ge 6$]
Let $G$ be an edge-colored graph on $2r$ vertices with $\deltac(G)\ge r+1$. Apply Lemma \ref{lem:colored-reduction} with $m=r$. For the complement $F$ of the underlying graph, we have $\Delta(F)\le r-2$, and so Lemma \ref{lem:complement-estimate} gives
\[
  2e(F)+t(F)\le \frac{r^3-4r+3}{3}.
\]
Therefore
\[
  \rt(G)
  \ge \frac{r(r-1)(r+4)}{3}
        -\frac{r^3-4r+3}{3}
  =r^2-1.
\]
This proves $f(2r)\ge r^2-1$.

To show sharpness, let the
underlying graph be $G_0=I_{r-1}\vee C_{r+1}$, the complete join of an independent set of size $r-1$ and a cycle of length $r+1$. Write $I_{r-1}=\{a_0,a_1,\ldots,a_{r-2}\}$ and write the cycle as $b_0b_1\cdots b_r b_0$. Use the color set $\mathbb Z_{r+1}$, and define
\[
  c(a_i b_j)=i+j\pmod{r+1}
\]
for $0\le i\le r-2$ and $0\le j\le r$, and define
\[
  c(b_jb_{j+1})=j+r\pmod{r+1}
\]
for $j\in\mathbb Z_{r+1}$.

Each vertex $a_i$ sees all $r+1$ colors on its edges to the cycle. Each vertex $b_j$ sees the colors $j,j+1,\ldots,j+r-2\pmod{r+1}$ on its edges to the independent set, and it sees the two cycle-edge colors $j+r-1$ and $j+r\pmod{r+1}$. Thus every vertex sees exactly $r+1$ colors, and $\deltac(G)=r+1$.

The only triangles in $G_0$ are the triples $a_i b_j b_{j+1}$. There are $(r-1)(r+1)=r^2-1$ such triangles. Their edge colors are $i+j$, $i+j+1$, and $j+r\pmod{r+1}$. These three colors are pairwise distinct because $0\le i\le r-2$. Hence all triangles are rainbow, and the construction has exactly $r^2-1$ rainbow triangles. Therefore $f(2r)\le r^2-1$.
\end{proof}

\begin{proposition}\label{prop:f4}
We have $f(4)=4$.
\end{proposition}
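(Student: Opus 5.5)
The plan has two parts. First I show that the hypotheses force the underlying graph to be $K_4$ with all four triangles rainbow, which gives $f(4)\ge 4$. Then I exhibit one coloring that meets the hypothesis, which gives $f(4)\le 4$.

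\emph{Lower bound.} Let $G$ be an edge-colored graph on $4$ vertices with $\deltac(G)\ge (4+1)/2$. Since color degrees are integers, $\dc_G(v)\ge 3$ for every $v$. Each vertex has at most $3$ neighbours, so $d_G(v)\le 3$. Combined with $d_G(v)\ge \dc_G(v)\ge 3$, every vertex has degree exactly $3$. Hence the underlying graph is $K_4$, and moreover the three edges at each vertex receive three distinct colors.

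Now take any triangle $xyz$ of $K_4$. Any two of its edges share a vertex, for example $xy$ and $xz$ share $x$. Distinctness of the colors at $x$ gives $c(xy)\ne c(xz)$, and the same argument at $y$ and $z$ handles the other two pairs. So every triangle is rainbow. Since $K_4$ has exactly $\binom43=4$ triangles, we get $\rt(G)=4$, and therefore $f(4)\ge 4$. Equivalently, the angle count of Lemma \ref{lem:same-color-pairs} is zero at every vertex.

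\emph{Upper bound.} Take the proper $3$-edge-coloring of $K_4$ in which each of the three perfect matchings gets its own color. Every vertex then sees all $3$ colors, so $\deltac=3\ge 5/2$. By the previous paragraph this coloring has exactly $4$ rainbow triangles, so $f(4)\le 4$, and hence $f(4)=4$.

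There is no real obstacle here. The only point needing care is the integrality step $\dc_G(v)\ge 5/2 \Rightarrow \dc_G(v)\ge 3$, together with the bound $d_G(v)\le n-1=3$. This step is also why the general even bound $r^2-1=3$ fails for $r=2$, and why Lemma \ref{lem:colored-reduction} assumed $2m\ge 6$.
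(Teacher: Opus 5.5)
Your proof is correct and matches the paper's argument: integrality forces $d_G(v)=\dc_G(v)=3$, so the underlying graph is $K_4$ with three distinct colors at each vertex and all four triangles are rainbow. Your explicit $1$-factorization coloring also spells out that an admissible graph exists, which the paper's proof leaves implicit. One small caveat on your closing side remark: the computation in Lemma~\ref{lem:colored-reduction} goes through verbatim for $m=2$ (the complement $F$ is empty and the bound reads exactly $\rt(G)\ge 4$), so the hypothesis $2m\ge 6$ there is not explained by the integrality step.
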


\begin{proof}
If $|V(G)|=4$ and $\deltac(G)\ge 3$, then the degree of each vertex is at least $3$, and the underlying graph is $K_4$. At each vertex, the three incident edges have pairwise distinct colors. In any triangle of this $K_4$, each pair of triangle edges meet at a vertex, and therefore each such pair has distinct colors. Hence, all four triangles are rainbow, so $\rt(G)=4$.
\end{proof}

Together with the preceding proof for $n\ge 6$, Proposition \ref{prop:f4} completes the proof of Theorem \ref{thm:main}(ii).

Theorem \ref{thm:main} determines the threshold problem exactly. For the special case $\deltac(G)\ge (n+1)/2$,
Theorem \ref{thm:main}(ii) improves the general bound of Li et al. \cite{LiNingShiZhang2024} as follows.
For $n=2r$, their estimate gives $\rt(G)\ge 2r(r+1)/3$, whereas Theorem \ref{thm:main}(ii) gives the exact value $r^2-1$ for $r\ge 3$. For $n=2r+1$, their estimate gives $\rt(G)\ge (r+1)(2r+1)/6$, whereas Theorem \ref{thm:main}(i) gives the exact value $r(r+1)/2$.

\section*{AI Statement}
The authors used the AI tool (Chatgpt pro.\,5.5) for proofreading and DeepSeek for grammar revision of the entire paper.

\end{document}